\title{Berge duals and universally tight contact structures}
\author{Christopher R. Cornwell}
\address{CIRGET \\ Universit\'e du Qu\'ebec \`a Montr\'eal, Montreal, QC}
\email{cornwell@cirget.ca}
\providecommand\@dotsep{5}
\def\listtodoname{List of Todos}
\def\listoftodos{\@starttoc{tdo}\listtodoname}
\begin{document}


\begin{abstract}Dehn surgery on a knot determines a \emph{dual} knot in the surgered manifold, the core of the filling torus. We consider duals of knots in $S^3$ that have a lens space surgery. Each dual supports a contact structure. We show that if a universally tight contact structure is supported, then the dual is in the same homology class as the dual of a torus knot.
\end{abstract}

\maketitle


\section{Introduction}
\label{sec:intro}
Let $p,q$ be coprime with $0\le q<p$. Considering $S^3$ as the unit sphere of $\C^2$, with coordinates $(z_1,z_2)$, define $L(p,q)$ as the quotient of $S^3$ under the $(\Z/p\Z)$-action $(z_1,z_2) \sim (\omega z_1,\omega^qz_2)$ where $\omega=\exp(2\pi\sqrt{-1}/p)$. Write $\pi:S^3\to L(p,q)$ for the associated covering map. 

If there is a Dehn surgery on a knot $K\subset S^3$ producing some lens space $L(p,q)$ then $K$ is called a \emph{lens space knot}. Passing to the mirror of $K$ if needed, we consider only positive surgeries. A method of constructing lens space knots is given in \cite{Ber}: let $\Sigma$ give a genus 2 Heegaard splitting $S^3 = H_1\cup_{\Sigma}H_2$; any knot embedded in $\Sigma$ so that it is primitive in $\pi_1(H_i)$ (under the inclusion $\Sigma\subset H_i$, $i=1,2$), has a lens space surgery with framing given by $\Sigma$. Knots which can be so embedded are called \emph{doubly primitive}. The Berge Conjecture \cite[Problem 1.78]{Kir95} is that every lens space knot is doubly primitive.

Berge catalogued ten families of doubly primitive knots which we refer to as Types I through X. \footnote{In fact, Berge described twelve families, I{--}XII. As in \cite{Ras}, our description in Types IX and X account for those in XI and XII by allowing the parameter $j$ to be negative.} Following convention we call the knots in Types I through X the \emph{Berge knots}, though it is now known that every doubly primitive knot is one of the Berge knots \cite{Gre}.

Recall that Dehn surgery on $K\subset S^3$ picks out a knot in the surgered manifold, the \emph{surgery dual}, defined as the core of the surgery solid torus. As elsewhere in the literature, if $K\subset S^3$ is a Berge knot and $K'\subset L(p,q)$ its surgery dual, we also call $K'\subset L(p,q)$ a Berge knot. A number of investigations take this dual perspective (e.g.\ \cite{BGH},\cite{Gre},\cite{Hed2},\cite{Ras}). 

By the work of numerous authors, a dual in $L(p,q)$ to a lens space knot is rationally fibered and supports a tight contact structure on $L(p,q)$ (see discussion in Section \ref{sec:bg}). A universally tight contact structure on $L(p,q)$ is obtained by using $\pi$ to push forward the standard contact structure $\xi_{st}$. Denote the resulting contact structure on $L(p,q)$ by $\xi_{p,q}$. When $0<q<p-1$ there is another universally tight contact structure, obtained by reversing the coorientation of the contact planes.  We consider which duals to a lens space knot support one of these contact structures.

\begin{thm} If $K\subset L(p,q)$ is dual to a lens space knot and $K$ supports a universally tight contact structure, then the homology class of $K$ in $H_1(L(p,q))$ contains a Berge knot that is dual to a torus knot. 
\label{mainthm}
\end{thm}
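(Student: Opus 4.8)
The plan is to convert the hypothesis that $K$ supports a universally tight structure into an explicit arithmetic condition on $p$, $q$ and the homology class $\mathfrak k=[K]\in H_1(L(p,q))\cong\Z/p\Z$, and then to match that condition against the surgery duals of torus knots.

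\emph{Reduction to an arithmetic condition.} First I would argue that the $\mathrm{Spin}^c$ structure $\mathfrak s_{\xi_K}$ and the $d_3$ invariant of the supported contact structure $\xi_K$ depend only on $(p,q,\mathfrak k)$. Indeed, by Greene's solution of the lens space realization problem, $\mathfrak k$ is realized by the surgery dual of a Berge knot, and the knot Floer homology of any lens space knot dual agrees with that of the simple knot in its homology class; combined with the rational-fiberedness recalled in Section~\ref{sec:bg}, this pins down the rational Seifert genus of $K$ and the further data of its rational open book that enter these homotopy invariants. On the other hand $\mathfrak s_{\xi_K}$ is the canonical $\mathrm{Spin}^c$ structure of the rational open book with binding $K$, computable from $\mathfrak k$ and the rational longitude, and $d_3(\xi_K)$ can be read off from the Heegaard Floer correction terms of $L(p,q)$ (via the Ozsv\'ath--Szab\'o description of $d_3$ of a supported structure), which are themselves determined by $(p,q,\mathfrak k)$. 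Now compute the invariants of the model: $\mathfrak s_{\xi_{p,q}}$ and $d_3(\xi_{p,q})$ from the Stein filling of $\xi_{p,q}$ given by the linear plumbing resolving the cyclic quotient singularity $L(p,q)$, using $d_3=\tfrac14(c_1^2-2\chi-3\sigma)$. Since ``$K$ supports a universally tight structure'' means $\xi_K\in\{\xi_{p,q},\overline\xi_{p,q}\}$, it forces an equality of these invariants --- a quadratic congruence in $\mathfrak k$ modulo $p$ together with an equality of rationals, which I will call the \emph{universal-tightness equation}.

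\emph{Torus knot duals and the endgame.} By Moser's classification a nontrivial torus knot $T_{r,s}$ has lens space surgeries exactly at the slopes $rs\pm 1$, producing $L(rs\pm 1,s^2)$ (up to the usual identifications of lens spaces), and the dual is a curve on the genus-one Heegaard torus whose homology class and rational genus are immediate; together with the surgeries on the unknot this gives an explicit list of triples $(p,q,\mathfrak k)$, and a direct check should confirm that each of them satisfies the universal-tightness equation. The theorem is the converse. To prove it I would feed the universal-tightness equation into the parametrization of the Berge families --- the Berge--Gabai knots, which contain the torus knots, the two families in the figure-eight complement, and the two sporadic families --- and verify that its only solutions $\mathfrak k$ are the ones already on the torus-knot list. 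As a consistency check, note that pulling $\xi_{p,q}$ back along $\pi$ recovers $\xi_{st}$, so the preimage $\pi^{-1}(K)$ is a $\Z/p\Z$-periodic fibered knot in $S^3$ whose complement covers a lens space knot complement, which one expects to force the torus-knot case.

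The main obstacle is the invariant computation: pinning down $d_3(\xi_K)$ and $\mathfrak s_{\xi_K}$ with the correct orientation, mirroring, and normalization conventions so that the comparison with the Stein-filling computation of $d_3(\xi_{p,q})$ is genuinely an equality of the same quantities --- sign and convention errors here are easy to make and fatal. A secondary difficulty is keeping the closing case analysis uniform rather than carrying it out separately for each Berge family; for that I would look for a formulation of the universal-tightness equation purely in terms of $(p,q,\mathfrak k)$.
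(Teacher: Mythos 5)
Your high-level architecture matches the paper's: use Greene's realization theorem to reduce to Berge duals whose genus and homology class are pinned down, extract from ``$K$ supports $\xi_{p,q}$ or $\overline\xi_{p,q}$'' a necessary arithmetic condition on $(p,q,[K])$, and then run that condition through the Berge families to isolate the torus-knot duals. The difference is the invariant you propose to extract, and that is where the gap lies. The paper does not use $d_3$ or the $\mathrm{Spin}^c$ structure at all: it uses the Baker--Etnyre theorem (Theorem \ref{thm:BE}), which says that $\xi(K,F)=\xi_{p,q}$ forces sharpness of the rational Bennequin--Eliashberg inequality $p\,\overline{\mathrm{sl}}_{\xi_{p,q}}(K)=-\chi(F)$, together with an explicit front-projection formula showing $p\,\overline{\mathrm{sl}}(K)\equiv -1-a_K+a_K^{-1}\pmod p$ depends only on the homology class (Lemma \ref{pSLMainCong}). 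This yields the concrete congruence (\ref{Noncongruence}), $-1-a_K+a_K^{-1}\equiv-\chi(K)\pmod p$, whose failure for every non-torus Berge family is then verified by hand (Lemma \ref{lem:Noncongruence}, occupying Section \ref{sec:types} and the appendix), with $-\chi(K)$ read off from positive braid presentations.

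Your ``universal-tightness equation'' is never actually derived, and that is not a cosmetic omission: essentially all of the content of the theorem lives in (a) producing a necessary condition that is explicitly computable from $(p,q,[K])$ and the genus, and (b) checking, family by family, that it fails except for torus-knot duals. You assert that $d_3(\xi_K)$ is ``read off from the correction terms of $L(p,q)$,'' but the correction terms alone do not determine $d_3$ of the supported structure; one needs the absolute Maslov grading of the contact class sitting in the extremal Alexander grading of the dual's knot Floer homology, which is a genuinely harder computation in the rational open book setting than the self-linking formula the paper uses. Nor have you verified, even in one example, that your condition discriminates: it is conceivable a priori that the $d_3$/$\mathrm{Spin}^c$ equality is satisfied by non-torus Berge duals (it is a single rational equality plus a $\mathrm{Spin}^c$ match, not obviously stronger than the paper's congruence), in which case the endgame would fail. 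Until the equation is written down and at least the Type II--X analysis is carried out against it, this is a plausible research plan rather than a proof. Two smaller points: Types VII and VIII are knots on the fiber surfaces of the trefoil and of the figure-eight respectively (not two families in the figure-eight complement); and you must also handle $\overline\xi_{p,q}$, which the paper does by allowing both signs of $a_K$ --- in your framework this means comparing against the conjugate $\mathrm{Spin}^c$ structure as well.
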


\begin{rem} We will prove (see Theorem \ref{mainthmBerge}) that a Berge knot $B\subset L(p,q)$ that supports a universally tight structure is dual to a torus knot. However, our techniques use only the homology class of $B$ and its genus. That the homology class of $K$ contains a Berge knot $B$ which has the same knot Floer homology as $K$, and thus genus, was shown in \cite{Gre} (see also \cite[Theorem 2]{Ras}).
\end{rem}

\begin{rem}
Theorem \ref{mainthm} implies, for a dual $K$ to a lens space knot, that the natural extension of the transverse invariant $\lambda^+$ of \cite{OST} to the setting of lens spaces does not live in the extremal Alexander grading of $\widehat{HFK}(L(p,q),K)$ unless $K$ is homologous to a torus knot dual.
\end{rem}

\comment{
Our motivation for considering which Berge knots support $\xi_{p,q}$ stems from a possible contact geometric approach to the Berge Conjecture, which we now outline.
 Let $L(p,q)=V_\alpha\underset{\Sigma}\cup V_\beta$ be a standard splitting of $L(p,q)$ into genus 1 handlebodies $V_\alpha,V_\beta$ and denote a meridian disk of $V_\alpha$ (resp.\ $V_\beta$) by $D_\alpha$ (resp.\ $D_\beta$). As originally indicated in \cite{Ber}, Berge knots in lens spaces may be represented as the union of an arc in $D_\alpha$ and an arc in $D_\beta$. They are \emph{simple knots} in the language of knot Floer homology. Further, any simple knot in $L(p,q)$ that has an integer $S^3$ surgery, has a doubly primitive dual in $S^3$.

 A generalization of grid diagram representations of knots in $S^3$ exists for knots in $L(p,q)$, and (similar to the situation in $S^3$) there is a relationship to Legendrians in $(L(p,q),\xi_{p,q})$. Following the language of \cite{BG} and \cite{BGH} if a $n$ is the minimum complexity of a toroidal grid diagram representing $K\subset L(p,q)$ we write $\text{GN}(K)=n$ (see Section \ref{sec:bg} on toroidal grid diagrams). A knot $K$ is simple when $\text{GN}(K)=1$. Thus the Berge Conjecture may be restated: If a knot $K\subset L(p,q)$ has an integer $S^3$ surgery then GN$(K)=1$.

Denote the maximal Thurston-Bennequn number of Legendrian representatives of $K$ by $\tbbar(K)$. It is a consequence of work in \cite{DP} that in the standard contact structure on $S^3$, there is an equality $-\text{GN}(K) = \tbbar(K)+\tbbar(m(K))$, where $m(K)$ is the mirror of $K$. It is possible that their work could be extended to the lens space setting, with the understanding that $\tbbar(K)$ is computed in $\xi_{p,q}$ and $\tbbar(m(K))$ in $\xi_{p,q'}$ where $qq'\equiv -1\md p$.

Hence the Berge Conjecture can be traded for a question about classical invariants of duals to lens space knots in $L(p,q)$. As briefly mentioned above, duals to lens space knots are rationally fibered and support a tight contact structure. But if a knot $K$ supports a given (tight) contact structure, there are Legendrian representatives of $K$ with Thurston-Bennequn number at least -1. Hence we would like to understand the contact structures supported by surgery duals in lens spaces.
}

Theorem \ref{mainthm} has a consequence for fractional Dehn twist coeficients of open books supported by lens space knots.
\begin{cor}If $K\subset S^3$ is a lens space knot with a surgery dual that is not homologous to the dual of a torus knot, then $c(h)<\frac{2}{2g(K)-1}$, where $g(K)$ is the Seifert genus and $h$ is the monodromy for $K$.
\label{FDTCcor}
\end{cor}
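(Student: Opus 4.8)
The plan is to obtain Corollary~\ref{FDTCcor} as a consequence of Theorem~\ref{mainthm}, the bridge being a translation of the hypothesis on the fractional Dehn twist coefficient into the statement that the surgery dual supports a universally tight contact structure. Set up the situation as follows: a lens space knot $K$ is fibered, with monodromy $h$ on a genus-$g$ page ($g=g(K)$); if $p$-surgery on $K$ yields $L(p,q)$, the dual $K'\subset L(p,q)$ is rationally fibered and the compatible contact structure $\xi_{K'}$ of the associated (rational) open book is tight, as recalled in Section~\ref{sec:bg}. We will also use the standard genus bound $p\ge 2g(K)-1$ for surgeries producing an L-space.

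The crux is the implication: \emph{if} $c(h)\ge \frac{2}{2g(K)-1}$, \emph{then} $\xi_{K'}$ is one of the (at most two) universally tight contact structures on $L(p,q)$. To prove it I would first record how the fractional Dehn twist coefficient transforms under the surgery. The page of the $K'$ open book is the page $\Sigma$ of $K$, now wrapping $p$ times around $K'$, and its rational fractional Dehn twist coefficient along $K'$ is an affine function of $c(h)$ whose leading behaviour is governed by the product $p\,c(h)$ (with an additive constant coming from the discrepancy between the Seifert longitude of $K$ and the rational longitude of $K'$). Since $\frac{2}{2g(K)-1}\ge \frac{2}{p}$ by the genus bound, the hypothesis gives $p\,c(h)\ge 2$, which makes this coefficient at least $1$: the monodromy of the $K'$ open book twists by at least a full positive boundary Dehn twist. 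A Honda--Kazez--Mati\'c-type argument in the rational setting then says that such an open book is not merely right-veering but supports the maximal element, with respect to the natural partial order, among the tight contact structures on $L(p,q)$; by Honda's classification that maximal element is $\xi_{p,q}$ (or, when $0<q<p-1$, its conjugate), which is universally tight.

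Granting the crux, the corollary follows by contraposition: if the dual $K'$ is not homologous to the dual of a torus knot, then by Theorem~\ref{mainthm} it supports no universally tight contact structure, hence $c(h)<\frac{2}{2g(K)-1}$. The step I expect to be the main obstacle is the crux itself, and within it the two ingredients that must be pinned down precisely: the exact affine relationship between the rational fractional Dehn twist coefficient of $K'$ and the pair $(c(h),p)$ --- the framing bookkeeping that determines the additive constant and fixes where the bound $p\ge 2g(K)-1$ is invoked --- and the promotion of ``maximally right-veering'' from ``tight'' to ``universally tight,'' which will require either an explicit comparison with the genus-one open books known to support $\xi_{p,q}$ or an appeal to Honda's classification of tight contact structures on lens spaces together with monotonicity of the contact invariant under positive boundary twisting.
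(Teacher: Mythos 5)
There is a genuine gap at what you yourself identify as the crux. You want to show that if $p\,c(h)\ge 2$ then the rational open book $(K',F)$ supports a universally tight contact structure, by arguing that a rational fractional Dehn twist coefficient at least $1$ forces the supported structure to be ``maximal'' and hence universally tight. No such result is available: the Honda--Kazez--Mati\'c circle of ideas relates a large fractional Dehn twist coefficient to \emph{tightness} (equivalently, it shows that an overtwisted supported structure forces the coefficient below $1$), but it never distinguishes universally tight from virtually overtwisted. On a lens space these genuinely differ --- indeed the whole content of Theorem \ref{mainthm} is that most Berge duals support tight-but-not-universally-tight structures --- and there is no criterion of the form ``FDTC $\ge 1$ implies the Honda-maximal/universally tight structure.'' So the implication you need cannot be extracted from the tools you cite, and the framing bookkeeping you flag as the other obstacle is secondary to this.

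The idea you are missing is to pass to the universal cover. The paper argues in the other (contrapositive-free) direction: if $K'$ is not homologous to a torus-knot dual, Theorem \ref{mainthm} says $\xi(K',F)$ is not universally tight, so its pullback under $\pi:S^3\to L(p,q)$ is \emph{overtwisted} --- here the uniqueness of the tight contact structure on $S^3$ converts ``not universally tight'' into ``overtwisted upstairs,'' which is exactly the hypothesis the honest Honda--Kazez--Mati\'c theorem needs. The lifted open book is an honest open book of $S^3$ with binding $\tilde K$ (the preimage of $K'$, equivalently the dual of $1$-surgery on the branch locus in $\Sigma_p(K)\cong S^3$) and monodromy $g=h^p\circ D_\gamma^{-1}$, so $c(g)=p\,c(h)-1$; overtwistedness gives $c(g)<1$, hence $c(h)<2/p<2/(2g(K)-1)$ using $p>2g(K)-1$. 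If you want to salvage your downstairs approach you would have to prove the missing ``rational HKM for universal tightness'' statement, which is precisely what the covering trick circumvents.
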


We should remark that the bound of Corollary \ref{FDTCcor} (in fact, a slightly better one) can be obtained by using \cite[Theorem 4.5]{KR} in place of Theorem \ref{mainthm} in the proof (see Section \ref{sec:proof}).

In Section \ref{sec:bg} we review work of Baker, Etnyre, and Van Horn-Morris, which allows us to approach Theorem \ref{mainthm} through the rational Bennequin-Eliashberg inequality. We also review how to compute the self-linking number of transverse knots in $\xi_{p,q}$ and what is known about homology classes of the surgery duals in $H_1(L(p,q))$. Section \ref{sec:types} addresses the sharpness of the rational Bennequin-Eliashberg inequality for Berge knots in each of the different types. The proofs of Theorem \ref{mainthm} and Corollary \ref{FDTCcor} are given in Section \ref{sec:proof}. 

\subsection*{Acknowledgments}
The author is very grateful to Ken Baker and Eli Grigsby for a number of motivating and clarifying conversations. This research was partially supported by a CIRGET postdoctoral fellowship.

\section{Background}
\label{sec:bg}

\subsection{Rational open book decompositions and lens space surgeries}
Let $Y$ be a closed, oriented 3-manifold and $K\subset Y$ an oriented rationally null-homologous knot (we consider only the connected case). The genus $g(K)$ is the minimal genus of a rational Seifert surface. If $K$ is the surgery dual to $K'\subset Y'$ then $g(K)=g(K')$. Set $-\chi(K) = 2g(K)-1$.

We call $K$ \emph{rationally fibered} if, for an open regular neighborhood $\nu(K)$, there is a fibration $\theta:Y\setminus\nu(K)\to S^1$ with compact fibers $F$ such that $\bd F$ is not meridional. That is, choosing oriented curves $\set{m,l}\subset \bd(Y\setminus\nu(K))$ with $m$ bounding a disk cooriented with $K$, and $l$ isotopic to $K$ in $\nu(K)$, then $[\bd F]=r[m]+s[l]$ in $H_1(\bd(Y\setminus\nu(K)))$ for some $s\ne 0$. A \emph{rational open book decomposition} of $Y$ is the data $(K,\theta,F)$ for a rationally fibered $K$. The binding of the rational open book is $K$; note, the fiber $F$ is a rational Seifert surface of $K$. When $s=1$ (so $K$ is null-homologous), we call $(K,\theta,F)$ an honest open book decomposition. Subsequently we suppress $\theta$ from the notation.

Alexander showed every closed, oriented $Y^3$ has an honest open book decomposition \cite{Alex}. Given an open book decomposition $(K,F)$ of $Y$, Thurston and Winkelnkemper \cite{TW} construct a contact 1-form $\alpha$ on $Y$ with the properties:
	\begin{itemize}
		\item[(a)]$\alpha(v)>0$ for any $v$ tangent to $K$ (and coherently oriented with $K$);
		\item[(b)]$d\alpha$ is a volume form on the interior of $F$.
	\end{itemize}
Whenever $\xi=\ker\alpha$ for a contact 1-form $\alpha$ satisfying (a) and (b), we say $(K,F)$ \emph{supports} $\xi$. Recently the construction of such a contact 1-form (and hence a supported contact structure) was generalized to rational open book decompositions \cite{BEVHM}. Two contact structures $\xi_1,\xi_2$ supported by $(K,F)$ are isotopic. So given a rational open book decomposition $(K,F)$ of $Y$ we may write $\xi(K,F)$ for the contact structure on $Y$ that it supports.

When $Y=S^3$ Hedden investigated conditions to determine whether a fibered $K$ supports the tight contact structure. He found the answer is connected to a knot being strongly quasipositive, and to the Ozsv\'ath-Szab\'o concordance invariant $\tau$. 
\comment{
We recall the definitions of a strongly quasipositive knot and of the $\tau$ invariant in order to state Hedden's theorem.

Let $\sigma_1,\sigma_2,\ldots,\sigma_{n-1}$ denote the standard generators of the braid group $B_n$ on $n$ strands. For any $i<j\le n$, let $\sigma_{i,j}$ define the braid $\sigma_{i,j}=(\sigma_i\ldots\sigma_{j-2})\sigma_{j-1}(\sigma_i\ldots\sigma_{j-2})^{-1}$ in $B_n$. A knot $K$ is called \emph{strongly quasipositive} if $K$ may be realized as the closure $\hat\beta$ of the braid
	\[\beta=\prod_{k=1}^m\sigma_{i_k,j_k}.\]

We note that the maximum Euler characteristic of a strongly quasipositive knot is known. Suppose $K$ is such a knot, realized as the closure of some $\beta\in B_n$ as above, where $\beta$ is the product of $m$ elements $\sigma_{i,j}$. Then $K$ has a Seifert surface $\Sigma$ constructed from $n$ parallel disks with $m$ half-twisted bands attached (the one corresponding to $\sigma_{i,j}$ being attached to the $i^{th}$ and $j^{th}$ parallel disk). Now, from the ``Legendrianization'' of the closure of $\beta$ (see \cite{Hed1}) which we call $L$, one sees that
	\[tb(L)+\abs{rot(L)}=-n+m.\]
So the Bennequin inequality is sharp on $K$ and the Seifert surface $\Sigma$ has maximum Euler characteristic.

Let $\cl F(Y,K,i)$ denote the filtration that a null-homologous knot $K\subset Y$ induces on the Ozsv\'ath-Szab\'o chain complex $\widehat{CF}(Y,\mathfrak s)$ associated to $Y$ and $\mathfrak s$, a spin$^c$-structure on $Y$. In the setting where $Y=S^3$, the concordance invariant is defined by
	\[\tau(K)=\text{min}\setn{j\in\Z}{i_*:H_*(\cl F(Y,K,j))\to\widehat{HF}(S^3)\cong\Z\text{ is non-trivial.}}\]
}

\begin{thm}[\cite{Hed1}]Let $(K,F)\subset S^3$ be a fibered knot. Then
	\[(K,F)\text{ is strongly quasipositive}\iff \xi(K,F)=\xi_{st}\iff \tau(K)=g(K),\]
where $\xi_{st}$ is the standard tight contact structure on $S^3$, $g(K)$ is the Seifert genus, and $\tau(K)$ the Ozsv\'ath-Szab\'o concordance invariant.
\label{thm:Hed}
\end{thm}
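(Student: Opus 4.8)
This is Hedden's theorem; the approach I would take is to prove the two biconditionals by running the cycle of implications: strong quasipositivity $\Rightarrow$ $\tau(K)=g(K)$ $\Rightarrow$ $\xi(K,F)=\xi_{st}$ $\Rightarrow$ strong quasipositivity, together with the short reverse implication $\xi(K,F)=\xi_{st}\Rightarrow\tau(K)=g(K)$, which closes the middle biconditional. Two of these implications are Heegaard Floer statements relating $\tau$ to the Ozsv\'ath--Szab\'o contact class; the remaining two are a braid-/gauge-theoretic computation and a structural fact about fiber surfaces.

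\emph{Strong quasipositivity $\Rightarrow\tau(K)=g(K)$.} Write $K=\hat\beta$ for a strongly quasipositive braid $\beta=\prod_{k=1}^m\sigma_{i_k,j_k}$ on $n$ strands and let $\Sigma$ be the associated Bennequin surface, so $-\chi(\Sigma)=m-n$. By Rudolph's slice--Bennequin inequality (built on Kronheimer--Mrowka), $\Sigma$ realizes the smooth four-genus of $K$; being a Seifert surface, it also realizes $g(K)$, so $g_4(K)=g(K)$ and, when $K$ is fibered, $\Sigma$ is the fiber $F$. Regarding $\hat\beta$ as a transverse knot gives a transverse representative of $K$ of self-linking number $m-n=-\chi(\Sigma)=2g(K)-1$, hence $\overline{sl}(K)\ge 2g(K)-1$. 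Combined with the Heegaard Floer transverse Bennequin inequality $\overline{sl}(K)\le 2\tau(K)-1$ this forces $\tau(K)\ge g(K)$, while $\tau(K)\le g_4(K)=g(K)$ always holds; so $\tau(K)=g(K)$.

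\emph{The biconditional $\tau(K)=g(K)\Leftrightarrow\xi(K,F)=\xi_{st}$.} The Heegaard Floer input is that, for a fibered knot $K\subset S^3$ (so that $\widehat{HFK}(S^3,K,g(K))\cong\Z$), the Ozsv\'ath--Szab\'o contact class $c(\xi(K,F))\in\widehat{HF}(-S^3)$ has an explicit description in terms of the knot filtration induced by $K$, from which $c(\xi(K,F))\ne 0 \iff \tau(K)=g(K)$ (this is where the fibered hypothesis enters, and where orientation conventions must be tracked carefully). Granting this, combine it with: overtwisted contact structures have vanishing contact class; $c(\xi_{st})\ne 0$ on $S^3$; and (Eliashberg) $S^3$ carries a unique tight contact structure. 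Then $\tau(K)=g(K)\Rightarrow c(\xi(K,F))\ne 0\Rightarrow\xi(K,F)$ tight $\Rightarrow\xi(K,F)=\xi_{st}$; and conversely $\xi(K,F)=\xi_{st}\Rightarrow c(\xi(K,F))=c(\xi_{st})\ne 0\Rightarrow\tau(K)=g(K)$.

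\emph{$\xi(K,F)=\xi_{st}\Rightarrow$ strong quasipositivity.} This is the step I expect to be the main obstacle: from the bare fact that the page $F$ supports the standard contact structure one must produce an explicit positive description of $F$ exhibiting strong quasipositivity. The route I would take is to show that $F$ is built from a disk by iterated plumbing (Murasugi sum) of \emph{positive} Hopf bands --- equivalently, that the open book $(K,F)$ is obtained from the trivial open book supporting $\xi_{st}$ by positive Hopf stabilizations --- appealing to the structure theory of open books supporting a fixed contact structure (Giroux--Goodman). Granting such a decomposition, a positive Hopf band is a strongly quasipositive surface and, by Rudolph, a Murasugi sum of strongly quasipositive surfaces is strongly quasipositive; hence $F$ is a strongly quasipositive Seifert surface for $K$. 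Establishing this structural statement precisely --- in particular ruling out that negative Hopf bands or destabilizations are genuinely needed --- is the crux of the argument.
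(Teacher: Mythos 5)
The paper does not prove this statement: it is quoted from Hedden, and the only indication of method given is the remark that the proof hinges on the equivalence $\xi(K,F)=\xi_{st}\iff c(\xi(K,F))\ne0$ together with the uniqueness of the tight contact structure on $S^3$. Your middle biconditional reproduces exactly that mechanism, and your first step (Bennequin surface, slice--Bennequin, and the bound $\overline{sl}(K)\le 2\tau(K)-1$) is a sound route to strong quasipositivity $\Rightarrow\tau(K)=g(K)$. So the architecture is right and two of the three blocks are fine.

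The gap is in your third step. Giroux--Goodman does \emph{not} say that an open book supporting $\xi_{st}$ is obtained from the trivial open book by positive Hopf stabilizations only; it says the two open books admit a \emph{common} positive stabilization. The statement you propose to establish --- that $F$ is built from a disk by positive Hopf plumbings with no deplumbings --- is strictly stronger, and to my knowledge is not a theorem one can appeal to (ruling out deplumbings is not something the stabilization theory gives you). The correct way to close this step is not to rule out deplumbings but to make them harmless: from the common positive stabilization one gets that $F$ plumbed with positive Hopf bands is isotopic to a positive Hopf plumbing of the disk, hence quasipositive; one then invokes the \emph{deplumbing} direction of Rudolph's quasipositive plumbing theorem --- a Murasugi \emph{summand} of a quasipositive surface is quasipositive --- to conclude that $F$ itself is quasipositive, and hence that $K$ is strongly quasipositive. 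You cite only the plumbing direction (a Murasugi sum of quasipositive surfaces is quasipositive), which runs the wrong way for this argument. With that substitution the step closes; as written, it does not.
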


\begin{rem} The method of Hedden's proof uses that $\xi(K,F)=\xi_{st}\iff c(\xi(K,F))\ne0$, where $c(\xi)$ is the Ozsv\'ath-Szab\'o contact invariant, which equivalence uses the uniqueness of the tight contact structure on $S^3$.
\end{rem}

Note that $K$ is transverse in $\xi(K,F)$. Representing $K$ as the closure of a strongly quasipositive braid $\beta$ (for which a natural ``Legendrianization'' $L$ exists), with fiber $F$, the Bennequin inequality $\slk(K) = \tb(L)+|\rot(L)|\le-\chi(F)$ is found to be equality (see \cite{Hed1}). Hence, if a fibered knot in $S^3$ supports the tight contact structure $\xi_{st}$ then by Theorem \ref{thm:Hed} the Bennequin inequality is sharp for that knot. 

In fact, this is a more general phenomenon. If $K$ is rationally fibered with fiber $F$, the construction of $\xi(K,F)$ guarantees that $sl_{\xi(K,F)}(K)=-\chi(F)$. 
The converse question is considered in \cite{BE} (in \cite{EVHM} for the case of an honest open book), when there are no overtwisted disks in the complement of $K$.

\begin{thm}[\cite{BE}] Let $(K,F)$ be a rationally fibered, transverse link in a contact 3-manifold $(Y,\xi)$ such that $\xi$ restricted to the exterior of $K$ is tight. Suppose that $[K]$ has order $r$ in $H_1(Y)$. Then $r\cdot\slk_{\xi}(K)=-\chi(F)$ if and only if either $\xi=\xi(K,F)$ or is obtained from $\xi(K,F)$ by adding Giroux torsion along incompressible tori in the complement of $K$. 
\label{thm:BE}
\end{thm}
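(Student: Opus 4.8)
The plan is to prove the two implications separately, with essentially all of the content concentrated in the ``sharp $\Rightarrow$ supported'' direction. For the easy direction, suppose first that $\xi=\xi(K,F)$; as recalled above, the construction of the supported contact structure forces $r\cdot\slk_\xi(K)=-\chi(F)$, so equality holds. If instead $\xi$ is obtained from $\xi(K,F)$ by adding Giroux torsion along incompressible tori $T$ in the complement of $K$, then I would note that $\slk_\xi(K)$ is computed as minus the relative Euler number of $\xi$ over the rational fiber $F$, and that inserting a full $2\pi$ twist in a neighborhood $T\times[0,1]$ contributes nothing to this relative Euler number: along each arc of $F\cap(T\times[0,1])$ the contact planes undergo a homotopically trivial full rotation. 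Hence $\slk$ is unchanged and equality persists.

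For the main direction, the strategy is to read the sharpness condition off from the characteristic foliation on the fiber. First I would perturb $F$ to be convex, arranging $K$ to be positively transverse in standard form near $\bd\nu(K)$ using the rational framing, and let $\Gamma_F$ be the resulting dividing set, cutting $F$ into positive and negative regions $F_+$ and $F_-$. A convex-surface computation of the relative Euler number then gives $r\cdot\slk_\xi(K)=-\bigl(\chi(F_+)-\chi(F_-)\bigr)=-\chi(F)+2\chi(F_-)$, where $\chi(F_-)=e_--h_-$ records the weighted count of negative singularities. Thus the Bennequin bound is sharp precisely when $\chi(F_-)=0$. Because $\xi$ is tight on the exterior of $K$, Giroux's criterion forbids any dividing curve of $\Gamma_F$ from bounding a disk, so $F_-$ has no disk components; combined with $\chi(F_-)=0$ this forces every component of $F_-$ to be an annulus. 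Sharpness is therefore equivalent to the existence of a convex page whose negative region is a disjoint union of annuli.

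With this normal form in hand, the final step is to reconstruct $\xi$ on the fibered exterior $M=Y\setminus\nu(K)$ and to match the remaining ambiguity with Giroux torsion. Writing $M$ as the mapping torus $F\times[0,1]/\!\sim$ of the first-return map, I would arrange $\Gamma_F$ to be invariant up to isotopy under the monodromy and then invoke the classification of tight contact structures on $F\times[0,1]$ with prescribed convex boundary. The minimally twisting, $I$-invariant model glues up to the contact structure whose pages are convex with this dividing set and whose binding is positively transverse, which by the Giroux--Torisu characterization of supporting contact structures is exactly $\xi(K,F)$. The only freedom a tight filling allows is to insert complete Giroux twists, necessarily supported near the cores of the annular components of $F_-$; as these cores sweep out tori under the monodromy, one obtains precisely Giroux torsion along tori in the complement of $K$, and the essential (non-$\bd$-parallel) cores yield the incompressible tori named in the statement.

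I expect the principal obstacle to be this last reconstruction step: upgrading the page-by-page normal form into a global statement about $\xi$ on $M$, and proving that the \emph{only} deviation from $\xi(K,F)$ permitted by tightness is Giroux torsion along the specified incompressible tori. This requires precise control of $\xi$ in a neighborhood of the rational binding $K$, where $F$ is not an honest Seifert surface and the framing is $r$-fold, together with Honda's gluing and classification results to exclude other tight modifications. Treating the rational case uniformly with the honest case, and verifying genuine incompressibility of the resulting tori rather than mere $\bd$-parallelism, are the points I anticipate will demand the most care.
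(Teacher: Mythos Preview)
This theorem carries the citation \cite{BE} and is not proved in the present paper at all; it is quoted from Baker--Etnyre as background input, exactly as Theorem~\ref{thm:Hed} is quoted from Hedden. There is therefore no ``paper's own proof'' to compare against. Everything the authors need is the statement itself (together with the remark immediately following it that in the lens space setting there are no incompressible tori, so the Giroux-torsion clause is vacuous).

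That said, your sketch is a reasonable outline of how the result is actually established in \cite{BE} and in \cite{EVHM} for the honest case: compute the self-linking via the relative Euler class on a convex page, so that sharpness forces $\chi(F_-)=0$; use tightness and Giroux's criterion to rule out disk components, leaving only annuli; then appeal to the classification of tight structures on $F\times I$ and the Torisu/Giroux characterization of supporting open books to identify $\xi$ with $\xi(K,F)$ up to inserting full Lutz twists along the tori swept out by the annular cores. Your identified ``principal obstacle'' is exactly where the work lies in the rational case: controlling $\xi$ near the binding when $\partial F$ is an $(r,s)$-cable rather than a longitude, and showing the tori one obtains are genuinely incompressible. If you want to turn this into an actual proof you should consult \cite{BE} directly for those details; they are not reproduced here.
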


Our approach to Theorem \ref{mainthm} involves showing that, except for duals to torus knots, a Berge knot $K\subset L(p,q)$ cannot satisfy the equality in Theorem \ref{thm:BE} when $\xi=\xi_{p,q}$.

In \cite{OS2}, Ozsv\'ath and Szab\'o show that any lens space knot $K$ has the property $\tau(K)=g(K)$. Thus, since all such knots are fibered by the work of Ni \cite{Ni}, Theorem \ref{thm:Hed} above tells us that every lens space knot supports the tight contact structure on $S^3$. 

Any dual $K\subset L(p,q)$ to a lens space knot is rationally fibered (and the order of $K$ equals $p$). Since the dual to $K$ supports $\xi_{st}$ (which has non-trivial contact invariant) and $p>2g(K)-1$ by \cite{KMOS}, the principal theorem of \cite{HPla} implies that $K$ supports a tight contact structure on $L(p,q)$.

\subsection{Classical invariants of links in $(L(p,q),\xi_{p,q})$}
\label{UTClassicalinvts}

Recall the covering map $\pi:S^3\to L(p,q)$ from Section \ref{sec:intro}. For a point $(z_1,z_2)\in \C^2$, represent $z_i$ for $i=1,2$ as $(r_i,\theta_i)$, letting $r_i$ be the modulus and $0\le\theta_i<2\pi$ the argument. Define a 1-form $\alpha_{st} = r_1^2d\theta_1 + r_2^2d\theta_2$ on $S^3\subset\C^2$. Then $\xi_{st}=\ker\alpha_{st}$ gives the standard contact structure on $S^3$. The ($\Z/p\Z$)-action fixes $r_1$ and $r_2$ and adds a constant to $\theta_1$ and $\theta_2$, hence $\xi_{st}$ has a well-defined pushforward $\xi_{p,q}=\pi_*(\xi_{st})$. So defined $\xi_{p,q}$ is a contact structure on $L(p,q)$ which by construction is universally tight.

By Theorem \ref{thm:BE}, for any rationally fibered $K\subset L(p,q)$ which has order $p$ in homology, and in $\xi_{p,q}$ is a transverse knot, $p\ \slk_{\xi_{p,q}}(K,F)=-\chi(F)$ if and only if $\xi_{p,q}=\xi(K,F)$. In this section we discuss calculating $\slk_{\xi_{p,q}}(K)$. Our approach is to consider a (positive) transverse knot in $(L(p,q),\xi_{p,q})$ as the push-off of a Legendrian knot, and to calculate the self-linking number from the invariants of that Legendrian. These classical invariants can be computed through the \emph{toroidal front projection} as we now review, following the discussion in \cite{BG}.

Using coordinates as above, and noting that $r_1$ determines $r_2$ we can identify points in $L(p,q)$ with those of a fundamental domain of the $(\Z/p\Z)$-action,
	\[\setn{(r_1,\theta_1,\theta_2)}{r_1\in[0,1],\ \theta_1\in\left[0,2\pi\right),\ \theta_2\in\left[0,\frac{2\pi}p\right)}.\]
Set $V_\alpha=\{r_1\le1/\sqrt2\}$ and $V_\beta = \{r_1\ge1/\sqrt2\}$ and let $\Sigma$ be the Heegaard torus $V_\alpha\cap V_\beta$, oriented as the boundary of $V_\alpha$ (so $\bd_{r_1}$ is the outward conormal).

Given a Legendrian $L$ in $(L(p,q),\xi_{p,q})$ (which may generically be assumed to miss the circles $\{r_1=0\}$ and $\{r_2=0\}$), the \emph{toroidal front projection} is the projection $f:(r_1,\theta_1,\theta_2)\mapsto (1/\sqrt2,\theta_1,\theta_2)$. Similar to the classical case, we may recover $L$ from $f(L)$. If we set $m=\frac{r_1^2}{(r_1^2-1)}$ then at each point in $f(L)$ we have $m = \frac{d\theta_2}{d\theta_1}$ and so $r_1=\sqrt{\frac{m}{m-1}}$ is determined by the slope of $f(L)$ in $\Sigma$. Also similar to the classical picture, if two arcs in $f(L)$ meet transversally at a point, then the arc with more negative slope (note $m\in(-\infty,0)$) has greater $r_1$ coordinate and so is the arc that passes over, from the viewpoint of $\bd_{r_1}$.

Up to Legendrian isotopy we may assume that each cusp of $f(L)$ is semi-cubical and locally symmetric about a line of slope -1. Thus, in a neighbourhood of a cusp $x$ there is exactly one strand of $f(L)-x$ with tangent slopes in the interval (-1,0). When $L$ is oriented we say $x$ is an \emph{upward cusp} if $\theta_1\vert_{f(L)}$ is increasing on that strand. We call $x$ a \emph{downward cusp} otherwise.

Note that
	\[\setn{(r_1,\theta_1,\theta_2)}{r_1=\frac1{\sqrt2}, \theta_1\in[0,2\pi), \theta_2\in[0,2\pi/p)}\]
is a fundamental domain for $\Sigma$ in our coordinates above. Dropping the constant $r_1$-coordinate, specify an identification to a torus by $(\theta_1,2\pi/p)\sim (\theta_1-2\pi q/p,0)$ and $(0,\theta_2) \sim (2\pi,\theta_2)$. As in \cite[Section4.1]{BG} we identify $\Sigma$ with the underlying torus of a genus 1 Heegard diagram for $L(p,q)$ (which is part of the data in a \emph{twisted toroidal grid diagram}). The identification matches an $\alpha$ curve in the diagram with a curve having a constant $\theta_2$ coordinate, say $\theta_2=0$ (oriented with increasing $\theta_1$), and a $\beta$ curve with one that has constant $\theta_1$ coordinate modulo $2\pi/p$. That is, the $\beta$ curve consists of points $\{(2\pi k/p,\theta_2) \mid k=0,\ldots,p-1\}$ (oriented with increasing $\theta_2$), which is connected since $p,q$ are coprime.

Let $A$ be the core of $V_\alpha$ and $B$ the core of $V_\beta$ oriented to intersect positively. Given any knot $K$, define $a_K,b_K\in\Z/p\Z$ so that $[K]=a_K[A]$ and $[K]=b_K[B]$ in $H_1(L(p,q))$. Write $\langle x,y \rangle$ for the algebraic intersection number of a curve $x$ with curve $y$ on $\Sigma$. For example, $\langle\alpha,\beta\rangle=p$. One can verify that
	\begin{equation}
	a_L \equiv \langle \alpha,f(L) \rangle\md p \qquad \text{and} \qquad b_L \equiv \langle f(L),\beta \rangle \md p.
	\label{ClassByIntNumbers}
	\end{equation}

\begin{rem}The correspondence between Legendrian links in $(L(p,q),\xi_{p,q})$ and \emph{toroidal grid diagrams} was carefully laid out by Baker and Grigsby. In Section \ref{secI} we only need this correspondence for some remarks on Berge duals to torus knots. As we do not need grid diagrams for our main result, we refer the interested reader to \cite{BG}.
\end{rem}

The classical invariants of $L$ can be readily calculated from $f(L)$, as in the standard contact structure on $\R^3$. This was discussed in \cite{Cor}, where the formulae were found by comparing with the $\Z/p\Z$-symmetric link in $S^3$ that covers $L$ via $\pi:S^3=L(1,0)\to L(p,q)$. 

In \cite{Cor} the correspondence between a Legendrian link and a toroidal grid diagram was view with the opposite orientation. However, here the front projection $f(L)$ for a Legendrian $L$ in $(L(p,q),\xi_{p,q})$ can be approximated by a rectilinear projection (a union of arcs parallel to $\alpha$ and to $\beta$) having the property that, at transverse double points, the arc which is parallel to $\beta$ passes over the arc parallel to $\alpha$. Accounting for this convention change, the following theorem was proved in \cite[Section 3]{Cor}.

\begin{thm}[\cite{Cor}]
If $L\subset (L(p,q),\xi_{p,q})$ is an oriented Legendrian link, let $w$ be the writhe of $f(L)$, let $c$ be the number of cusps, $c_d$ the number of downward cusps, and $c_u$ the number of upward cusps in $f(L)$. Also set $a=\langle \alpha,f(L) \rangle$ and $b=\langle f(L),\beta \rangle$. Then
	\al{
	\emph{tb}(L)	&= w-\frac12c+\frac{ab}p \\
	\intertext{and}
	\emph{rot}(L)	&= \frac12\left(c_d-c_u\right)+\frac{(a+b)}p.
	}
\label{tbandrotForm}
\end{thm}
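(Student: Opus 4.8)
The plan is to deduce both formulae by comparing the Legendrian $L \subset (L(p,q),\xi_{p,q})$ with its $p$-fold cyclic cover $\widetilde L = \pi^{-1}(L) \subset (S^3,\xi_{st})$, where the classical invariants $\mathrm{tb}$ and $\mathrm{rot}$ are computed by the familiar $\mathbb{R}^3$ front-projection formulae (writhe minus half the cusps; half the signed cusp count). First I would set up the lifted front: since $f$ is just the composition of the radial projection to $\Sigma$ followed by the identification of the fundamental domain, the preimage $f^{-1}$ of a toroidal front diagram for $L$ is a genuine front diagram for $\widetilde L$ on the full Heegaard torus $\widetilde\Sigma$ of $S^3$, obtained by stacking $p$ translated copies of the fundamental domain around the $\theta_2$ direction and regluing. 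Crucially, the over/under convention is preserved by the covering (more negative slope $\Rightarrow$ larger $r_1$ $\Rightarrow$ passes over), so crossing signs and writhe lift correctly.

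The key steps, in order: (1) Establish that $\mathrm{tb}(\widetilde L) = p\cdot\mathrm{tb}(L)$ and $\mathrm{rot}(\widetilde L) = p\cdot\mathrm{rot}(L)$ — this is the behavior of the Thurston--Bennequin and rotation numbers under a $p$-fold cover whose deck group acts freely and preserves the contact form (so the Seifert-framing/contact-framing comparison and the winding of the tangent field both multiply by $p$); alternatively, track it diagram-wise, noting $\widetilde L$ has $p$ times as many cusps of each type and that each crossing of $f(L)$ on the torus lifts to $p$ crossings (but self-crossings coming from the $\beta$-identification must be handled carefully — see below). (2) Compute $w(f^{-1}(f(L)))$, the writhe of the lifted torus front, in terms of $w$, $a$, and $b$: the ``internal'' crossings of $f(L)$ lift with multiplicity $p$ contributing $pw$, while the identification $(\theta_1,2\pi/p)\sim(\theta_1 - 2\pi q/p,0)$ forces the $a$ strands crossing a meridian and the $b$ strands crossing a longitude to re-cross each other in a standard torus braid pattern, contributing exactly $ab$ extra crossings of a fixed sign — this is where the $ab/p$ term is born, using $a \equiv \langle\alpha,f(L)\rangle$, $b\equiv\langle f(L),\beta\rangle$ from \eqref{ClassByIntNumbers}. (3) Do the analogous bookkeeping for cusps and for the rotation number: cusps don't interact with the identification, giving the $pc$ and $\frac{p}{2}(c_d-c_u)$ terms, while the $(a+b)/p$ term in $\mathrm{rot}$ arises from the winding contribution of the $a$ longitudinal and $b$ meridional strands against the trivialization of $\xi_{st}$ pulled back along $\pi$. (4) Divide the $S^3$ formulae by $p$, invoking the convention-reversal remark (the paper already notes $f(L)$ corresponds to the grid with $\beta$-over-$\alpha$, matching \cite{Cor} after accounting for the orientation flip) to land on the signs as stated.

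The main obstacle I expect is step (2): correctly counting the crossings introduced by the gluing $(\theta_1,2\pi/p)\sim(\theta_1 - 2\pi q/p, 0)$ and verifying they contribute $+ab$ (not $-ab$, and not $ab$ up to lower-order corrections) to the lifted writhe, consistently with the chosen orientation of $\Sigma$ as $\partial V_\alpha$ and the chosen generators $A,B$ with $\langle A,B\rangle > 0$. One must be careful that $a$ and $b$ are only defined mod $p$, yet $ab/p$ appears as an honest rational number in $\mathrm{tb}(L)$; the resolution is that the \emph{lifted} diagram sees honest integer intersection numbers $\langle\alpha, f^{-1}f(L)\rangle$ etc., and one picks the representatives of $a,b$ that these honest counts provide, so that $p \mid$ (the combination that must be an integer). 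Pinning down these signs and representatives — ideally by checking a single explicit example such as the core of $V_\alpha$ or a standard Legendrian unknot in each homology class — is the delicate part; everything else is the standard ``invariants multiply by the degree of the cover'' principle plus routine front-diagram combinatorics.
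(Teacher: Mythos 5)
Your plan is essentially the paper's own route: the theorem is imported from \cite{Cor}, and the paper explicitly states that the formulae there ``were found by comparing with the $\Z/p\Z$-symmetric link in $S^3$ that covers $L$ via $\pi$,'' which is exactly your lift-to-$S^3$, count-crossings-and-cusps-in-the-cover, divide-by-$p$ strategy, including the identification of the $ab$ term with the extra crossings forced by the gluing of the fundamental domain. The delicate points you flag (sign of the $ab$ contribution, the choice of integer representatives of $a,b$ coming from the honest intersection counts, and the convention reversal) are the same ones the paper itself notes when translating the conventions of \cite{Cor}.
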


Denote by $L_+$ the positive transverse push-off of $L$. Then the self-linking number satisfies
	\begin{equation} \slk(L_+) = \tb(L) - \rot(L).
	\end{equation}
With the notation of Theorem \ref{tbandrotForm}, we have the following corollary.

\begin{cor}
For any Legendrian $L$ in $(L(p,q),\xi_{p,q})$ we have
	\[\emph{sl}(L_+) = w - c_d + \frac{ab - a-b}p.\]
In particular, we have the equation $p\cdot\emph{sl}(L_+) \equiv ab - a - b \md p$. 
\label{ResofSL}
\end{cor}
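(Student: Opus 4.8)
The plan is to obtain this immediately from Theorem \ref{tbandrotForm} together with the identity $\slk(L_+) = \tb(L) - \rot(L)$ recorded just above. Substituting the two formulae of Theorem \ref{tbandrotForm} gives
\[
\slk(L_+) \;=\; \left( w - \frac12 c + \frac{ab}{p} \right) \;-\; \left( \frac12(c_d - c_u) + \frac{a+b}{p} \right).
\]
The fractional parts combine as $\frac{ab}{p} - \frac{a+b}{p} = \frac{ab - a - b}{p}$, so the only thing left to do is to simplify the cusp contribution $-\frac12 c - \frac12(c_d - c_u)$. First I would substitute $c = c_d + c_u$, which turns this into $-\frac12(c_d + c_u) - \frac12 c_d + \frac12 c_u = -c_d$, producing the displayed formula $\slk(L_+) = w - c_d + \frac{ab - a - b}{p}$.

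For the ``in particular'' clause, I would observe that $w$, $c_d$, $a$, and $b$ are all integers ($a = \langle\alpha,f(L)\rangle$ and $b = \langle f(L),\beta\rangle$ are algebraic intersection numbers). Hence multiplying the identity through by $p$ yields $p\cdot\slk(L_+) = p(w - c_d) + ab - a - b$, and reducing modulo $p$ kills the first term, leaving $p\cdot\slk(L_+) \equiv ab - a - b \md p$.

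There is no substantive obstacle here; the statement is a purely formal consequence of results already in hand. The only points requiring care are the sign bookkeeping in collapsing the cusp term and making sure the orientation conventions of Theorem \ref{tbandrotForm} are the same ones used in the self-linking identity $\slk(L_+) = \tb(L) - \rot(L)$ — both of which have already been fixed in the preceding discussion.
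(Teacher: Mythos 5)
Your proposal is correct and matches the paper's intended argument: the corollary is stated as an immediate consequence of Theorem \ref{tbandrotForm} and the identity $\slk(L_+)=\tb(L)-\rot(L)$, and your substitution and simplification of the cusp term via $c=c_d+c_u$ is exactly the computation required. The ``in particular'' clause follows as you say, since $w$, $c_d$, $a$, $b$ are integers.
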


Note in Corollary \ref{ResofSL} that, modulo $p$, we have $p\ \slk(L_+)$ only depending on the homology type of the knot. So for the maximal self-linking number in a (primitive) knot type $K\subset L(p,q)$, we have $p\ \slbar(K)\equiv a_Kb_K - a_K - b_K \md p$.

\subsection{Homology classes of duals in $L(p,q)$ to lens space knots}

Conditions on the homology class in $L(p,q)$ of a knot which is dual to a lens space knot in $S^3$ were discussed in \cite[Section 2]{Ras} (we are using the opposite orientation, as done in \cite{Gre}). We recall some of the results here. Let $q'$ be such that $qq' \equiv 1\md p$.

\begin{lem}Given any knot $K\subset L(p,q)$, $b_K \equiv a_Kq \md p$.
\label{HomClassCongr}
\end{lem}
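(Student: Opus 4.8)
The plan is to work directly with the description of the homology classes in terms of the cores $A$ and $B$ of the two solid tori in the standard genus-1 Heegaard splitting of $L(p,q)$. Recall from Section \ref{UTClassicalinvts} that for any knot $K\subset L(p,q)$ we have $[K]=a_K[A]$ and $[K]=b_K[B]$ in $H_1(L(p,q))\cong\Z/p\Z$, so the content of the lemma is purely a statement about the relationship between the classes $[A]$ and $[B]$ in this group. In other words, it suffices to show that $[B]=q[A]$ (equivalently $[A]=q'[B]$), since then $b_K[B]=[K]=a_K[A]$ combined with $[A]=q'[B]$ gives $b_K\equiv a_Kq\md p$ after multiplying through.

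First I would fix the presentation of $H_1(L(p,q))$ coming from the Heegaard diagram: with $\alpha$ the meridian of $V_\alpha$ and $\beta$ the meridian of $V_\beta$, the curve $\beta$ becomes nullhomotopic in $L(p,q)$ only after attaching the $\beta$-handle, and $H_1(L(p,q))$ is generated by $[A]$ (the core of $V_\alpha$), with the single relation coming from how $\beta$ sits on $\Sigma$ relative to $\alpha$ and the longitude of $V_\alpha$. Concretely, using the coordinates set up before \eqref{ClassByIntNumbers}, the $\alpha$ curve is $\{\theta_2=0\}$ and the $\beta$ curve is $\{\theta_1\equiv 0 \md 2\pi/p\}$, and the gluing $(\theta_1,2\pi/p)\sim(\theta_1-2\pi q/p,0)$ records that, in $H_1(\Sigma)$, the class of $\beta$ is $q[\alpha]+p[\lambda_\alpha]$ where $\lambda_\alpha$ is the longitude of $V_\alpha$ intersecting $\alpha$ once. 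Since $[\alpha]=0$ and $[\lambda_\alpha]=[A]$ in $H_1(V_\alpha)$, pushing into $L(p,q)$ kills $[\beta]$, yielding the relation $p[A]=0$; and the core $B$ of $V_\beta$, being isotopic in $V_\alpha$ to the curve on $\Sigma$ dual to $\beta$, is homologous to $\lambda_\alpha$ read through the same gluing, giving $[B]=q[A]$. This is essentially a bookkeeping computation with the gluing matrix $\begin{pmatrix} -q & 1 \\ p & 0\end{pmatrix}$ (or its inverse), together with the identification of $a_K,b_K$ via algebraic intersection numbers in \eqref{ClassByIntNumbers}.

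I expect the main obstacle to be purely one of conventions: the orientation of $\Sigma$, the choice of which curve is $\alpha$ versus $\beta$, the sign ambiguity between $q$ and $q'$ (and between $q$ and $-q$), and the fact that the excerpt explicitly warns that \cite{Cor} used the opposite orientation from \cite{BG}, while \cite{Ras} and \cite{Gre} use yet another. Getting $b_K\equiv a_Kq$ rather than $b_K\equiv a_Kq'$ or $b_K\equiv -a_Kq$ requires being careful that $A$ and $B$ are oriented to intersect positively (as stipulated in Section \ref{UTClassicalinvts}) and that the gluing data is read in the direction consistent with the definitions of $a_K$ and $b_K$. The cleanest way to pin this down is to test the identity on the generators themselves: take $K=B$, so $b_B=1$, and verify directly from \eqref{ClassByIntNumbers} that $a_B\equiv q' \md p$, which then forces $b_K\equiv a_K q\md p$ in general by linearity of $K\mapsto([K])$ over $\Z/p\Z$. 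Once the conventions are nailed down the proof is a one-line computation; the write-up should therefore spend its effort making the identifications explicit rather than on any substantive argument.
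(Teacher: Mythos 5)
Your overall strategy---reduce the lemma to the single relation between $[A]$ and $[B]$ in $H_1(L(p,q))$ and extract that relation from the genus-one Heegaard gluing---is exactly the paper's (its proof is the one line ``$[A]=q[B]$, hence $b_K[B]=[K]=a_K[A]=a_Kq[B]$,'' with the intersection-number identity \eqref{ClassByIntNumbers} offered as an alternative). But the relation you propose to prove, and the algebra you do with it, are each off by an inverse. If $[A]=q'[B]$ (your stated target, ``equivalently $[B]=q[A]$''), then $b_K[B]=a_K[A]=a_Kq'[B]$ gives $b_K\equiv a_Kq'\md p$, not $b_K\equiv a_Kq$; ``multiplying through'' by $q$ only yields $a_K\equiv qb_K$, which is the transposed congruence and is equivalent to the lemma only when $q^2\equiv1\md p$. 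The relation that actually delivers the lemma is $[A]=q[B]$, i.e.\ $[B]=q'[A]$---and this is also what your own gluing computation should produce: writing $[\beta]=q[\alpha]+p[\lambda_\alpha]$ in $H_1(\Sigma)$ as you do, the core $B$ is carried by a curve with class $u[\alpha]+v[\lambda_\alpha]$ forming a basis of $H_1(\Sigma)$ together with $[\beta]$, which forces $qv-pu=\pm1$ and hence $[B]=v[A]=\pm q'[A]$ in $H_1(L(p,q))$, not $q[A]$.

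Your closing sanity check actually detects this: verifying $a_B\equiv q'\md p$ from \eqref{ClassByIntNumbers} means precisely $[B]=q'[A]$, which contradicts the $[B]=q[A]$ asserted two paragraphs earlier. So as written the argument either proves the wrong congruence or derives the right one from a false intermediate claim. The fix is purely mechanical---replace $[B]=q[A]$ by $[A]=q[B]$ throughout and redo the one line of modular arithmetic---after which your proof coincides with the paper's.
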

\begin{proof} Expressing $[A]\in H_1(L(p,q))$ in terms of $[B]$ we have $[A] = q[B]$. Hence $b_K[B] = [K] = a_K[A] = a_Kq[B]$. (Alternatively, the equivalence is implied by (\ref{ClassByIntNumbers}) and that $f(L)$ is a closed curve.)\end{proof}

\begin{lem}If $K\subset L(p,q)$ is any knot with positive integral surgery yielding $S^3$ then $a_K^2q \equiv -1 \md p$ and $b_K \equiv -a_K^{-1} \md p$.
\label{QuadResLem}
\end{lem}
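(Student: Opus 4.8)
The plan is to compute the linking form $\lambda$ of $L(p,q)$ on the class $[K]\in H_1(L(p,q);\mathbb Z)\cong\mathbb Z/p\mathbb Z$ in two ways and compare. First I would note that $[K]$ generates $H_1(L(p,q))$, so that $a_K$ (hence also $b_K$) is a unit modulo $p$: by hypothesis $K$ is the surgery dual of an integer surgery on a knot $K'\subset S^3$, and the core of a surgery solid torus is isotopic in that torus to a meridian $\mu'$ of $K'$, whose image generates $H_1(L(p,q))$. In particular $[K]$ has order exactly $p$, so $\lambda([K],[K])\in\mathbb Q/\mathbb Z$ is defined.

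For the first computation I would use the surgery‑dual picture directly. Since the surgery is integral, $L(p,q)$ is obtained from $S^3$ by $p$‑surgery on $K'$ along the slope $p\mu'+\lambda'$, where $\mu',\lambda'$ are the meridian and Seifert longitude of $K'$, and $[K]=[\mu']$. Capping a Seifert surface of $K'$ with the meridian disk of the surgery torus produces a $2$‑chain bounding $p[\mu']$, and intersecting it with a push‑off of $\mu'$ gives $\lambda([K],[K])=\mp 1/p$. This is just the standard linking form of an integer surgery on a knot in $S^3$; the sign is the one forced by the orientation fixed for $L(p,q)$ and the positivity of the surgery.

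For the second computation I would use the genus‑$1$ Heegaard splitting $L(p,q)=V_\alpha\cup_{\Sigma}V_\beta$ with cores $A,B$ and meridian disks $D_\alpha,D_\beta$. Using the gluing relation $[\beta]=q[\alpha]+p[\ell]$ in $H_1(\Sigma)$, where $\ell\subset\Sigma$ is a curve isotopic to $A$ in $V_\alpha$ (this is the relation compatible with $[A]=q[B]$ of Lemma~\ref{HomClassCongr}), one sees that $D_\beta-q\,D_\alpha$ bounds $p[A]$; its algebraic intersection with a push‑off of $B$ into $V_\beta$ is $\pm 1$, because $D_\alpha$ misses that push‑off while $D_\beta$ meets it once. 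Hence $\lambda([A],[B])=\pm 1/p$ — the cores of the two handlebodies link once in $L(p,q)$. Since $[K]=a_K[A]=b_K[B]$, bilinearity gives $\lambda([K],[K])=a_Kb_K\,\lambda([A],[B])=\pm a_Kb_K/p$, and comparing with the first computation yields $a_Kb_K\equiv\pm1\pmod p$; once the two signs are reconciled this reads $a_Kb_K\equiv-1$. Combining with $b_K\equiv a_Kq\pmod p$ from Lemma~\ref{HomClassCongr} gives $a_K^2q\equiv a_Kb_K\equiv-1\pmod p$, and then $b_K\equiv-a_K^{-1}\pmod p$, as claimed.

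The step I expect to be the main obstacle is reconciling the signs. The manifold $L(p,q)$ carries a definite orientation (as a quotient of $S^3\subset\mathbb C^2$), ``positive surgery'' is measured against it, and the linking form reverses sign under orientation reversal, so producing $-1$ rather than $+1$ above requires orienting everything consistently through both calculations. Since the sign contributed by each of the two computations is independent of the particular knot $K$, the net sign in $a_Kb_K\equiv\pm1$ is universal; in practice it therefore suffices either to push one fully oriented version of the argument through, or to pin the sign down on a single explicit example — for instance a torus knot together with one of its integral lens space surgeries, for which $a_K$ and $q$ can be written down directly.
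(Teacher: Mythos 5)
Your proposal is correct and follows essentially the same route as the paper: the paper also derives the congruences from the linking form, using that a positive integral $S^3$ surgery forces $[K]\cdot[K]\equiv 1/p \pmod 1$ and comparing with the value of the form on the Heegaard cores (the paper quotes $[B]\cdot[B]=-q'/p$ and computes $-b_K^2q'\equiv 1$, which is equivalent to your $a_Kb_K\,\lambda([A],[B])$ computation via $[A]=q[B]$), then invokes Lemma~\ref{HomClassCongr} exactly as you do. The sign bookkeeping you flag is real but is resolved the same way in both versions, by the orientation conventions already fixed for $L(p,q)$ and the positivity of the surgery.
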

\begin{proof}This argument was given in \cite[Section 2]{Ras}. Roughly, the surgery condition requires the (topological) self-linking number $[K]\cdot[K] \equiv 1/p \md 1$. Since $[B]\cdot[B]=-q'/p$, we get $-a_K^2q \equiv -b_K^2q' \equiv 1\md p$ and also $b_K \equiv a_Kq \equiv -a_K^{-1}\md p$ by Lemma \ref{HomClassCongr}.\end{proof}

\begin{lem}For any Legendrian representative $L$ in $(L(p,q),\xi_{p,q})$ of a Berge knot,
		\[p\ \overline{\emph{sl}}(L_+) \equiv -1-a_L+a_L^{-1} \md p\].
\label{pSLMainCong}
\end{lem}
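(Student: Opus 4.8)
The plan is to read this off from Corollary~\ref{ResofSL} together with Lemma~\ref{QuadResLem}, the point being that the left-hand side depends only on the homology class of the knot, which for a Berge dual is strongly constrained.

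First I would recall the last sentence of Corollary~\ref{ResofSL}: for the maximal self-linking number in the (primitive) knot type of $K\subset L(p,q)$ one has $p\ \slbar(L_+)\equiv a_Kb_K-a_K-b_K\md p$, where $a_K,b_K\in\Z/p\Z$ are defined by $[K]=a_K[A]=b_K[B]$ in $H_1(L(p,q))$. Since the right-hand side is a function of $[K]$ alone, and $a_L=a_K$, $b_L=b_K$ for every Legendrian representative $L$ of $K$, it suffices to evaluate $a_Kb_K-a_K-b_K$ modulo $p$ for a Berge knot.

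Second, a Berge knot $K\subset L(p,q)$ is by definition the surgery dual of a Berge knot in $S^3$; in particular it admits a positive integral surgery yielding $S^3$, and (as recalled in Section~\ref{sec:bg}) its order in $H_1(L(p,q))$ is exactly $p$. Hence Lemma~\ref{QuadResLem} applies and gives $b_K\equiv -a_K^{-1}\md p$; note that $a_K$ is a unit modulo $p$ since $a_K\cdot(a_Kq)\equiv a_K^2q\equiv -1\md p$ by the same lemma, so $a_K^{-1}$ is meaningful. Substituting $b_K=-a_K^{-1}$ into the congruence from the previous paragraph,
\[
p\ \slbar(L_+)\equiv a_K(-a_K^{-1})-a_K-(-a_K^{-1}) = -1-a_K+a_K^{-1}\md p,
\]
which is exactly the claimed congruence with $a_K=a_L$.

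Since the proof is essentially a one-line substitution, I do not expect a genuine obstacle; the only thing to be careful about is orientation bookkeeping, as reversing the orientation of $K$ negates both $a_K$ and $b_K$ and so changes $a_Kb_K-a_K-b_K$. Thus one must check that the orientation convention underlying Corollary~\ref{ResofSL} is the same as the one used in Lemma~\ref{QuadResLem} (the convention fixed in Section~\ref{sec:intro}, matching \cite{Ras} and \cite{Gre}); once this is confirmed, the computation above goes through verbatim.
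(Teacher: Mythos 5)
Your proposal is correct and is essentially the paper's own argument: the paper proves Lemma \ref{pSLMainCong} by combining Corollary \ref{ResofSL} with Lemmas \ref{HomClassCongr} and \ref{QuadResLem}, exactly the substitution $b_K\equiv -a_K^{-1}$ into $a_Kb_K-a_K-b_K$ that you carry out. Your added remarks on the invertibility of $a_K$ and on orientation conventions are sensible bookkeeping but do not change the route.
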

\begin{proof}
	The lemma follows directly from Corollary \ref{ResofSL} and Lemmas \ref{HomClassCongr} and \ref{QuadResLem}.
\end{proof}

Homology classes of Berge knots $K\subset L(p,q)$ in Types I through X satisfy the list of congruences found in \cite[Section 1.2]{Gre}. We will refer to these congruences in the course of the next section. The $k$ in these tables is the homology class ($\pm a_K^{\pm1}$ in our notation). Note that, by Lemma \ref{pSLMainCong}, the congruence class of $-\chi(K)$ agrees with $p\ \slbar(K)$ when $a_K=k$ if and only if they agree for $a_K=-k^{-1}$. Hence it suffices to check the cases $a_K=\pm k$.

\section{Types of Berge knots and Bennequn inequality}
\label{sec:types}
This section is devoted to the proof of the following lemma.

\begin{lem} Let $K\subset L(p,q)$ be a Berge knot. If the congruence
			\begin{equation}
			-1-a_K+a_K^{-1}\equiv-\chi(K) \md p
			\label{Noncongruence}
			\end{equation}
holds then $K$ is dual to a torus knot.
\label{lem:Noncongruence}
\end{lem}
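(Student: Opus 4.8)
The plan is to prove Lemma \ref{lem:Noncongruence} by a direct, finite verification over Berge's ten families I--X, using nothing about $K$ beyond its homology class and its genus. For each type, Berge's parametrization (recorded as the congruences in \cite[Section 1.2]{Gre}) writes the order $p$ and the homology class $k$ --- which, up to the symmetry $k\sim-k\sim k^{-1}\sim-k^{-1}$ noted after Lemma \ref{pSLMainCong}, equals $\pm a_K^{\pm1}$ --- as explicit functions of one or two integer parameters; in the same parameters one has a closed formula for $g(K)$, hence for $-\chi(K)=2g(K)-1$, since the Alexander polynomial of a Berge knot is known. After these substitutions, (\ref{Noncongruence}) becomes for each type an explicit arithmetic assertion, and by the remark following Lemma \ref{pSLMainCong} it suffices to test $a_K=k$ and $a_K=-k$.

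The uniform mechanism in each case is an interval estimate. Because $p>2g(K)-1$ by \cite{KMOS}, the integer $-\chi(K)$ lies in $\{1,\dots,p-1\}$ (the unknot, with $g=0$, is a torus knot and is disposed of separately), and, fixing the representatives of $a_K,a_K^{-1}$ in $\{1,\dots,p-1\}$, the integer $N:=-1-a_K+a_K^{-1}-(-\chi(K))$ then lies in the window $(-2p,p)$. Hence (\ref{Noncongruence}) holds precisely when $N\in\{0,-p\}$, and for each type one solves this short condition --- linear in the parameters, or quadratic for the sporadic families --- in the given parameters. For the non--torus-knot Berge knots --- the genuine $1$-bridge braids in the Berge--Gabai families, the cable families, the fiber-surface knots of Types VII and VIII, and the sporadic knots of Types IX and X --- one shows $N$ is pinned strictly inside $(-p,0)$ or $(0,p)$ for both $a_K=k$ and $a_K=-k$, so (\ref{Noncongruence}) fails. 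For the parameter values that produce honest torus knots (which occur inside the Berge--Gabai families, Types I and II) one sees instead, consistently with $2g(T(r,s))=rs-r-s+1$ together with Lemmas \ref{HomClassCongr} and \ref{QuadResLem}, that $N$ can equal $0$; in particular one must confirm that the exceptional sub-locus on which $N\equiv0$ in Types I and II consists exactly of duals of torus knots (and not of nontrivial $1$-bridge braids or cables), which gives the lemma on those types.

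I expect the main obstacle to be the bookkeeping in the two most intricate settings. First, the sporadic Types IX and X: there $p$, $k$, and $g(K)$ are quadratic in the parameters, and ruling out the accidental coincidences $N=0$ and $N=-p$ requires a careful two-sided estimate on $N$ rather than the crude bound used above. Second, inside the Berge--Gabai families one must pin down exactly which parameter values give honest torus knots, since cables of torus knots and nontrivial $1$-bridge braids are \emph{not} duals of torus knots and must be shown to violate (\ref{Noncongruence}). Once the genus formulas for all ten families are assembled and these two cases are handled, the remaining types are routine.
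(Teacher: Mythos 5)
Your proposal follows essentially the same route as the paper: a type-by-type verification over Berge's families I--X, using only the homology class and the genus (computed from a positive-braid representative in each family), reducing (\ref{Noncongruence}) to an explicit arithmetic condition on the parameters and killing it by size estimates, with the torus-knot duals appearing exactly at the parameter values where the congruence can hold. Two small corrections to your roadmap: the exceptional torus-knot locus is not confined to Types I--II --- the paper also finds a trefoil dual inside Type III, at $K(1,1,2,0,-1)$ with $t=-1$ --- and the genuinely delicate estimates occur in Types III--VI for small $|t|$, where the relevant quantity can exceed $p$ and one must invoke parity of $p$ and of the expression to exclude the value $p+1$, whereas the sporadic Types IX--X reduce to easy linear congruences.
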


To prove Lemma \ref{lem:Noncongruence} we consider each of the types of Berge knots. For completeness the first subsection deals with Type I, the torus knots. The results there help with the Type II knots in Section \ref{secII}. After this we address Types III{--}VI (Section \ref{secIII-VI}), Type VII (Section \ref{secVII}), Type VIII (Section \ref{secVIII}), and then Types I{--}X (Section \ref{secIX-X}).

\subsection{Type I, torus knots}
\label{secI}

For coprime integers $i,k>1$, the $(i,k)$ torus knot has two lens space surgeries, with coefficient $ik\pm1$. Letting $K\subset L(p,q)$ be the dual Berge knot, $a_K\in\{\pm k,\pm i\}$ \cite{Gre}.

Suppose $p=ik+1$. Noting $i\equiv -k^{-1}$ and the discussion after Lemma \ref{pSLMainCong}, we need only consider $a_K\in\{\pm k\}$. If $a_K=k$ then by Lemma \ref{pSLMainCong}
		\[p\ \slbar(K) \equiv ik - k - i \md p,\]
which equals $-\chi(K)$. Let us show the congruence is equality.

\begin{thm}Let $K\subset L(ik+1,-i^2)$ be the dual to $(ik+1)$-surgery on the $(i,k)$ torus knot, oriented so that $a_K=k$. Then $K$ supports $\xi_{p,q}$.
\label{torusknots}
\end{thm}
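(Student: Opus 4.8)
The goal is to show that for the torus knot dual $K \subset L(ik+1, -i^2)$ with $a_K = k$, the rational open book $(K,F)$ supports precisely $\xi_{p,q}$. By Theorem \ref{thm:BE}, since $K$ is rationally fibered of order $p = ik+1$ and $\xi_{p,q}$ restricted to the exterior of $K$ is tight (it is universally tight on all of $L(p,q)$), it suffices to show two things: first, that $p\cdot\slk_{\xi_{p,q}}(K) = -\chi(F)$, i.e.\ the rational Bennequin--Eliashberg inequality is sharp; and second, that no Giroux torsion can be added along incompressible tori in the complement of $K$ to pass from $\xi(K,F)$ to $\xi_{p,q}$ (or, more simply, that $\xi_{p,q}$ itself is the one supported, ruling out the torsion ambiguity). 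The first of these is the main content; the second should follow because the complement of $K$ is the torus knot exterior (a Seifert fibered space), which contains no incompressible torus along which one can add Giroux torsion while staying tight.

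For the sharpness, I would exhibit an explicit Legendrian representative $L$ of $K$ in $(L(p,q),\xi_{p,q})$ whose positive transverse push-off $L_+$ achieves $\slk(L_+) = -\chi(K)/p$. By Lemma \ref{pSLMainCong} we already know $p\cdot\slbar(K) \equiv -\chi(K) \md p$ when $a_K = k$ (this was the computation $p\,\slbar(K) \equiv ik - k - i$), so what remains is to upgrade the congruence to an equality. The natural approach is to use the toroidal front projection machinery of Theorem \ref{tbandrotForm} and Corollary \ref{ResofSL}: the $(i,k)$ torus knot sits on the Heegaard torus $\Sigma$ as a curve of the appropriate slope, and its image $f(L)$ under the toroidal front projection can be taken to be a rectilinear diagram with $a = \langle\alpha, f(L)\rangle$ and $b = \langle f(L),\beta\rangle$ determined by the homology class ($a \equiv k$, $b \equiv i$ modulo $p$, by Lemma \ref{HomClassCongr} and the relation $i \equiv -k^{-1}$). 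One then checks that this grid/front representative can be arranged with writhe $w$ and downward cusp count $c_d$ making $\slk(L_+) = w - c_d + (ab - a - b)/p$ equal to $-\chi(K) = ik - k - i$ on the nose — essentially because the torus knot, being a curve on $\Sigma$ of minimal complexity, has a "staircase" grid representative (GN$(K) = 1$, a simple knot) with no superfluous crossings or cusps.

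The step I expect to be the main obstacle is precisely this last verification: pinning down the exact values of $w$ and $c_d$ for the minimal toroidal grid representative and confirming there is no "gap" between the actual maximal self-linking number and the value forced modulo $p$. This requires a careful, if routine, bookkeeping of cusps and crossings in the grid diagram for the simple knot in the class $a_K = k$, together with the fact (from Hedden's theorem, Theorem \ref{thm:Hed}, applied to the $(i,k)$ torus knot in $S^3$, which is strongly quasipositive and supports $\xi_{st}$) that the Bennequin inequality is sharp "upstairs" in $S^3$; one then transfers sharpness downstairs through the covering $\pi$, comparing $\slk_{\xi_{p,q}}(K)$ with $\slk_{\xi_{st}}$ of its $\Z/p\Z$-symmetric preimage, as was done in \cite{Cor} to derive the formulae of Theorem \ref{tbandrotForm}. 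An alternative, cleaner route to the same conclusion: since the torus knot dual $K$ arises as a curve on the genus-one Heegaard surface and the torus knot in $S^3$ is fibered with fiber whose interior receives a positive $d\alpha$ for the standard contact form $\alpha_{st}$, one can argue directly that the Thurston--Winkelnkemper contact form built from $(K,F)$ is isotopic to $\pi_*\alpha_{st}$, hence $\xi(K,F) = \xi_{p,q}$ by definition — bypassing the Legendrian computation entirely and instead checking the compatibility of the rational open book with the explicit contact form on $L(p,q)$.
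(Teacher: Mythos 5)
Your overall framework is the same as the paper's: reduce to sharpness of the rational Bennequin--Eliashberg bound via Theorem \ref{thm:BE}, and establish sharpness by exhibiting a grid-number-one Legendrian front $f(L)$ with $a=k$, $b=i$ and computing $\slk(L_+)=w-c_d+(ab-a-b)/p$ from Corollary \ref{ResofSL}. Your side remark about Giroux torsion is also a fair supplement (the paper leaves it implicit; the torus knot exterior is atoroidal, so the torsion alternative in Theorem \ref{thm:BE} is vacuous here). However, the step you defer as ``routine bookkeeping'' --- pinning down $w$ and $c_d$ for the grid-number-one representative --- is in fact the entire mathematical content of the proof, and you have not supplied it. A priori the $\alpha$-parallel segment of the front must travel from the terminal point of the $\beta$-parallel segment $\gamma$ back to its initial point, and $\gamma$ meets the circle $\{\theta_2=n\}$ in $k$ points whose $\theta_1$-positions are governed by the residues $-mi^2 \bmod p$ for $0<m<k$; there is no reason visible from ``minimal complexity'' alone that the connecting horizontal arc avoids all of these, i.e.\ that $w=0$. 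The paper proves this with a genuine number-theoretic lemma (Lemma \ref{lem:torusdualwrithe}: if $-mi^2\equiv n \bmod p$ with $0<m<k$ then $n\ge i$), which guarantees a $\theta_1$-gap of width $i\cdot 2\pi/p$ through which the $\alpha$-parallel arc can be routed, together with the observation that the front can be chosen with both cusps upward (so $c_d=0$). Without that lemma, or an equivalent, the congruence $p\,\slbar(K)\equiv -\chi(K)\bmod p$ does not upgrade to an equality and the proof is incomplete.

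Your two proposed alternatives do not close this gap as stated. Transferring sharpness from the $(i,k)$ torus knot in $(S^3,\xi_{st})$ through the cover $\pi$ is circular in this form: the relation between $\slk_{\xi_{p,q}}$ downstairs and the invariants of the $(\Z/p\Z)$-symmetric preimage is exactly what produces the correction term $(ab-a-b)/p$ in Corollary \ref{ResofSL}, and it yields only the congruence modulo $p$, not the equality; the equivariant Legendrian lift realizing the maximal self-linking upstairs need not exist. The suggestion to compare the Thurston--Winkelnkemper form for $(K,F)$ directly with $\pi_*\alpha_{st}$ would indeed bypass the front computation, but it requires exhibiting the rational open book fibration of $L(p,q)\setminus\nu(K)$ compatibly with $\alpha_{st}$, which is a construction you have not carried out. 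Either route would need to be made precise; as written, the proposal is a correct plan with its central verification missing.
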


\begin{proof}
We use a fact concerning the correspondence between front projections and twisted toroidal grid diagrams in \cite{BG}. If $f(L)$ corresponds to a grid diagram with grid number 1, $f(L)$ can be approximated by the union of two linear segments on $\Sigma$, one parallel to $\alpha$, the other parallel to $\beta$. Suppose the segment parallel to $\alpha$ is in $\{\theta_2=n\}$ for a constant $n\in[0,2\pi/p)$. Then there is an $L'$ Legendrian isotopic to $L$ such that $f(L')$ is approximated by replacing the segment parallel to $\alpha$ by its complement in $\{\theta_2=n\}$. Then, $f(L)$ has $c$ cusps for $c\in\{0,2\}$ and $f(L')$ has $|c-2|$ cusps. In similar fashion, we can replace the segment parallel to $\beta$ by its complement. 

The Legendrian isotopy in the above observation can be achieved through a flow in the radial direction -- relying on $\bd_{r_1}$ being contained in $\xi_{st}=\ker(r_1^2d\theta_1+r_2^2d\theta_2)$. For details see \cite{BG}.

Every Berge knot in $L(p,q)$ has a Legendrian representative $L$ in $\xi_{p,q}$ so that $f(L)$ corresponds to a grid diagram with grid number 1 (\cite{BGH} or \cite{Gre}, grid number 1 was implicit in \cite{Ber}). Since any front projection associated to a grid number one diagram is made of only two arcs, one parallel to $\alpha$ and one parallel to $\beta$, there are 4 front projections associated to a grid number 1 diagram and they are determined by the signs of $\langle \alpha,f(L) \rangle$ and $\langle f(L),\beta \rangle$. Choose $f(L)$ so that both quantities are positive. In this case $f(L)$ has exactly two upward cusps.

\begin{lem}Let $p=ik+1$, $i,k>1$ and coprime. Given an integer $0<m<k$ let $0\le n < p$ be such that $-mi^2\equiv n \md p$. Then $i\le n$.
\label{lem:torusdualwrithe}
\end{lem}
\begin{proof} If not, there is an $m$ such that $-mi^2\equiv n \md p$ and $0\le n<i$. Multiplying by $k$, we have $mi\equiv nk \md p$. But $0<mi<p$ and $0\le nk < p$, so $mi=nk$. Since $m$ is too small to contain all factors of $k$, $k$ and $i$ are not coprime, a contradiction. \end{proof}

Returning to the Berge knot $K\subset L(ik+1,q)$, Lemma \ref{QuadResLem} implies $q\equiv -a_K^{-2} \md p$. So when $a_K=k$ we have $q\equiv -i^2$.

Let $\gamma$ be the oriented segment parallel to $\beta$ in the front $f(L)$ that represents $K$. We chose $\gamma$ so that $0<\langle\alpha,f(L)\rangle=k \le p$ (note that $\langle f(L),\beta \rangle = i$ also). Starting at the initial point of $\gamma$, every subarc that intersects $\alpha$ in $m$ points has endpoint that is distance $\frac{2(p-n)\pi}p$ to the left of the initial point of $\gamma$, where $n \equiv -mi^2 \md p$. Thus by Lemma \ref{lem:torusdualwrithe} there is a $\theta_1$ interval of length $i\big(\frac{2\pi}{p}\big)$ to the left of the initial point of $\gamma$ not intersecting $\gamma$. Our choice of $f(L)$ makes the segment parallel to $\alpha$ pass through this interval, hence the writhe of $f(L)$ is zero.

Since the writhe of $f(L)$ is zero and it has only upward cusps, $p\ \slk(L_+) = ki-k-i =-\chi(K)$. Appealing to Theorem \ref{thm:BE} this proves the result.
\end{proof}

We now discuss the other cases.

For $p=ik+1$ and $a_K=-k$, then $p\ \slbar(K) \equiv -1 + k + i \md p$. Thus if (\ref{Noncongruence}) holds then \begin{equation}2k+2i \equiv 0 \md p.\label{TorusCong1}\end{equation} Since $2k+2i < ik+1$ when $\min(i,k)\ge 4$, without loss of generality we may assume $i<k$ and $i=2$ or $i=3$. If $i=2$ then (\ref{TorusCong1}) implies $0\equiv 2k+4 \equiv 3 \md{2k+1}$, which does not hold. If $i=3$ then (\ref{TorusCong1}) implies $0\equiv 2k+6 \equiv 5-k \md{3k+1}$. This does hold when $k = 5$; however, a direct calculation of the self-linking number (from the front projection) in this case and applying \cite[Corollary 1.6]{Cor} shows that $p\ \slbar(K) = -9 < -\chi(K)$.

If $p=ik-1$  and $a_K=k$, then we have $p\ \slbar(K) \equiv ik - k + i - 2 \md p$ since $1 \equiv ik \md p$. If (\ref{Noncongruence}) were to hold then $2i \equiv 2 \md p$, but this is clearly impossible.  If $a_K = -k$ then $p\ \slbar(K) \equiv -1 + k - i \md p$ and (\ref{Noncongruence}) holds unless $2k \equiv 2 \md p$, which likewise is impossible.

\subsection{Type II, 2-cables of torus knots}
\label{secII}

The knots in this family are determined by integers $i,k\ge 4$ such that gcd$(i,k)=2$. The construction of the Berge knot in $S^3$ is that of a 2-cable (more precisely, the $(2,(\frac k2-1)i+i\pm1)$-cable) of the $(\frac k2,\frac i2)$ torus knot. This 2-cable (or its mirror) is the closure of a positive braid with index $k$ and $(k-2)i+i\pm 1$ crossings. Hence, for $K$ determined by the pair $(i,k)$ in Type II, $-\chi(K)=(ki-k-i)\pm 1$.

The surgery coefficient is $p=ik\pm 1$. As in Type I, we must check $a_K=\pm k$. Note that $p\ \slbar(K)$ is in the same class mod $p$ as the Type I knot with equivalent $a_K$. 

Let $p=ik+1$. If $a_K = k$ we saw in Section \ref{secI} that $p\ \slbar(K) \equiv ki - k - i \md p$. So $p\ \slbar(K) \equiv -\chi(K)\mp1 \md p$ and (\ref{Noncongruence}) cannot hold. If $a_K=-k$ then $p\ \slbar(K) \equiv ki + k + i \md p$ and so (\ref{Noncongruence}) requires $2k+2i \equiv \pm1 \md p$. As $\min(i,k)\ge 4$ we see that $p=ik+1>2k+2i+1$ contradicts (\ref{Noncongruence}) also.

When $p=ik-1$ and $a_K=k$ then $p\ \slbar(K) \equiv ki-k+i-2 \md p$. Then (\ref{Noncongruence}) only holds if either $2i \equiv 3 \md p$ or $2i \equiv 1 \md p$, both of which are ruled out by $\min(i,k)\ge 4$. The case $a_K=-k$ is similar.

\subsection{Types III{--}VI}
\label{secIII-VI}
Berge knots in Types III{--}VI were discussed in \cite{Bg2}, later in \cite{Yam1}. We follow notation of \cite{Yam1} with each knot $K(\delta,\ve,A,k,t)\subset S^3$ being determined by a quintuple of numbers, where $\delta,\ve\in\set{1,-1}$, $A\in\Z_{>0}$, $k\in\Z_{\ge0}$, and $t\in\Z$. Type VI is a subcase of V; we simply discuss Types III{--}V. 

Let $K\subset L(p,q)$ be the dual to $K(\delta,\ve,A,k,t)$. Defining $B,b$ as in Table \ref{BbinTypes}, $(A,B,\delta,b)$ are the coefficients of the basis of the homology of the genus 2 splitting, used by Berge in \cite{Bg2}, and so $p=\abs{Bb+A\delta}$ and $a_K \equiv \pm B \md p$. As a note of comparison to the tabulation in \cite{Ras}, here $p=\abs{\delta A(1-\ve cB)-\delta\ve tB^2}$ where $c=2$ in Type III and $c=1$ in Type IV and V. The ``$d$'' in Rasmussen's table is $A$ in our notation. When $a_K = \pm B$, by Lemma \ref{pSLMainCong} we have 
	\begin{equation}
	p\ \slbar(K) \equiv -1 \mp B \pm B^{-1} \md p
	\label{SLtypeIII}
	\end{equation}

We calculate $-\chi(K)=-\chi(K(\delta,\ve,A,k,t))$. Every $K(\delta,\ve,A,k,t)$, or its mirror, is the closure of a positive braid as follows. Let $\sigma_1,\sigma_2,\ldots,\sigma_{n-1}$ denote the standard set of generators of the braid group $B_n$, and define $W(n)=\sigma_{n-1}\sigma_{n-2}\cdots\sigma_1\in B_n$. For each of Type III{--}V there are two numbers $B>0$ and $b\in\Z$, determined by $(\delta,\ve,A,k,t)$ as indicated in Table \ref{BbinTypes}. 

The knot $K(\delta,\ve,A,k,t)$ is the braid closure of $W(B)^bW(A+1-a)^{\delta}$ where $a\in\set{0,1}$ is fixed in each Type as indicated in Table \ref{BbinTypes} (\cite{Bg2}, see \cite[Lemma 2.1]{Yam1}). If $b\ge0$ and $\delta=1$ then this is a positive braid. If $b<0$ then changing the sign of $\delta$ also changes the sign of $b$. Once this is done, if $\delta=-1$ then use \cite[Lemma 2.3]{Yam1} which shows that, for $m>0$, the closures of $W(n_1)^mW(n_2)^{-1}$ and $W(n_1)^{m-1}W(n_1-n_2+1)$ are isotopic. 

\begin{table}[ht]
\begin{tabular}{c | c | c | c | c}
Type	& $a$	& $A$			& $B$			& $b$\\
\hline
III		& 0		&$\ge2$			& $A(3+2k)-\ve$	&$-\delta\ve(2A+tB)$\\
IV		& 1		&$\ge5$, odd	& $(A(5+2k)-\ve)/2$	&$-\delta\ve(A+tB)$\\
V		& 1		&$\ge3$, odd	& $Al+\ve$		&$-\delta\ve(A+tB)$\\
\hline
\end{tabular}
\caption{A table of the ranges of $A$, and values of $B$ and $b$ in Types III{--}V. Here $\delta,\ve\in\{\pm1\}$, $k\in\Z_{\ge0}$, $t\in\Z$. In Type V, $l=2+k$ if $\ve=1$ and $l=3+k$ if $\ve=-1$.}
\label{BbinTypes}
\end{table}

In any of the cases above, $K(\delta,\ve,A,k,t)$ is the closure of an index $B$ braid and we obtain
	\begin{equation}
	-\chi(K) = \abs{b}(B-1)+\delta(A-a)-B.
	\label{Eulerchar}
	\end{equation}

Note that $A>1$ and $B>2A$ for Types III{--}V, and thus $b$ is never zero.
\begin{lem} Given $B$ and $p$ as above in some Type III{--}V, if $t>0$ then $p \ge tB^2+B-A$ and if $t < 0$ then $p \ge |t+1|B^2+B-A$.
\label{B^2lem}
\end{lem}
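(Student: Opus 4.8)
The plan is to work directly with the formula $p = |Bb + A\delta|$, substituting the value of $b$ recorded in Table~\ref{BbinTypes}. In every one of Types III{--}V the table gives $b = -\delta\varepsilon(cA + tB)$ for an appropriate constant $c$ (namely $c=2$ in Type III and $c=1$ in Types IV and V), so
\[
Bb + A\delta = -\delta\varepsilon B(cA + tB) + A\delta = \delta\bigl(A - \varepsilon c A B - \varepsilon t B^2\bigr).
\]
Hence $p = |A - \varepsilon cAB - \varepsilon t B^2|$, matching the comparison with Rasmussen's tabulation noted just before the lemma. The idea is then to split on the sign of $\varepsilon t$ and estimate this quantity from below, using that $B > 2A > 2$ and $cAB < B^2/2$ (since $cA \le 2A < B$).

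First I would handle $t > 0$. If $\varepsilon = -1$ then $p = A + cAB + tB^2$, which is clearly at least $tB^2 + B - A$ (indeed much larger). If $\varepsilon = 1$ then $p = |A - cAB - tB^2| = cAB + tB^2 - A$ (the expression inside is negative since $cAB + tB^2 > B^2/2 \cdot 2 > A$). Now $cAB \ge B$ because $cA \ge A \ge 2 > 1$, so $p = tB^2 + cAB - A \ge tB^2 + B - A$, as desired. Next, for $t < 0$, write $t = -(|t+1|+1)$ when $t \le -1$, i.e. $tB^2 = -|t+1|B^2 - B^2$. If $\varepsilon = 1$ then $p = |A - cAB + |t|B^2| = |t|B^2 + cAB - A$ if $|t|B^2 \ge cAB - A$, which holds since $|t| \ge 1$ and $B > cA$; then $p = |t|B^2 - cAB - A + 2cAB \ge (|t|-1)B^2 + B - A = |t+1|B^2 + B - A$ using $cAB \ge B$ and $B^2 \ge cAB$ again, but one must be careful with the exact bookkeeping here. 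If $\varepsilon = -1$ then $p = |A + cAB - |t|B^2|$; for $|t|\ge 1$ this is $|t|B^2 - cAB - A$, and the same manipulation $|t|B^2 - cAB = (|t|-1)B^2 + (B^2 - cAB) \ge (|t|-1)B^2 + B$ (since $B^2 - cAB = B(B-cA) \ge B$ as $B - cA \ge 1$) gives $p \ge |t+1|B^2 + B - A$.

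The main obstacle I anticipate is not conceptual but combinatorial: one has four sign combinations $(\varepsilon, \operatorname{sgn} t)$ to treat, and in each the absolute value must be resolved correctly, which requires verifying that the dominant $B^2$ term genuinely dominates $cAB$ and $A$ — this uses $B > cA$ (equivalently $B > 2A$ in Type III and $B > A$ in Types IV, V, both guaranteed since $B > 2A$) together with $A \ge 2$. The only genuinely delicate point is the case $t = -1$, where $|t+1| = 0$ and the claimed bound degenerates to $p \ge B - A$; here $p = |A - \varepsilon cAB + \varepsilon B^2|$ and since $B^2 - cAB = B(B - cA) \ge B$ and $A < B$, one still gets $p \ge B - A$ directly. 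Once all four cases are checked against the inequality $cA \ge A \ge 2$ and $B \ge 2A+1$, the estimate follows; I would present it compactly by first reducing to $p = |t|B^2 \pm cAB \mp A$ (signs depending on the case), and then noting $B(B - cA) \ge B$ to peel off one factor of $B^2$.
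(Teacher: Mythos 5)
Your proposal follows the same route as the paper: substitute $b=-\delta\varepsilon(cA+tB)$ into $p=|Bb+\delta A|$ to get $p=|A-\varepsilon cAB-\varepsilon tB^2|$, resolve the absolute value case by case, and peel off one $B^2$ using $B(B-cA)\ge B$ (the paper uses the slightly stronger input $B\ge(c+1)A-1$ together with $A\ge2$, but $B\ge cA+1$ suffices exactly as you use it). The one flaw is in the subcase $t<0$, $\varepsilon=1$: there you resolve $\bigl|A-cAB+|t|B^2\bigr|$ as $|t|B^2+cAB-A$, but since the quantity inside is nonnegative the correct value is $p=|t|B^2-cAB+A$; as written you verify the inequality for a quantity strictly larger than $p$, which proves nothing about $p$ itself. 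The slip is harmless because the correct value also works: $|t|B^2-cAB+A-\bigl((|t|-1)B^2+B-A\bigr)=B(B-cA-1)+2A\ge0$ since $B\ge cA+1$, so the bound holds in that subcase by the same device you already use when $\varepsilon=-1$. With that one line repaired, your argument is complete and is essentially the paper's proof.
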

\begin{proof}
Define $c\in\{1,2\}$ as above. In each of the three Types we have the inequalities $B \ge (c+1)A-1$ and $A \ge 2$ . If $t>0$ then $p=\abs{Bb+\delta A}= cAB+tB^2\pm A > tB^2 + B - A$ as $A\ge 2$. 

If $t<0$ then $p \ge -tB^2-cAB - A$. As $B - A + 1 \ge cA$ we have
	\[-tB^2 -cAB \ge (-t-1)B^2 + AB - B \ge (-t-1)B^2+B.\]
Thus $p \ge (-t-1)B^2+B-A = |t+1|B^2+B-A$.
\end{proof}

\begin{lem} If $t$ is odd then $p$ is odd.
\label{Parityofp}
\end{lem}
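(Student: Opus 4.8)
The claim is that if $t$ is odd then $p$ is odd, where $p = |Bb + A\delta|$ with the parameters as in Table \ref{BbinTypes}. The plan is to reduce everything modulo $2$ and track parities through the formulas. Recall $b = -\delta\varepsilon(cA + tB)$ with $c = 2$ in Type III and $c = 1$ in Types IV, V (combining the table entries: in Type III the coefficient of $A$ is $2$, in IV and V it is $1$), so $Bb = -\delta\varepsilon B(cA + tB) = -\delta\varepsilon(cAB + tB^2)$. Then $p = |{-\delta\varepsilon(cAB + tB^2)} + A\delta| = |\delta| \cdot |{-\varepsilon(cAB + tB^2)} + A| = |A - \varepsilon(cAB + tB^2)|$ since $\delta = \pm 1$.

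Working modulo $2$: since $\varepsilon = \pm 1$ we have $\varepsilon \equiv 1 \pmod 2$, so $p \equiv A + cAB + tB^2 \equiv A + cAB + tB \pmod 2$ (using $B^2 \equiv B$). Now I would split on the type to pin down parities of $A$ and $B$. In Type III, $A \ge 2$ is unconstrained in parity and $B = A(3 + 2k) - \varepsilon \equiv A - 1 \pmod 2$ (since $3 + 2k$ is odd and $\varepsilon$ is odd), so $B \not\equiv A$; with $c = 2$, $cAB \equiv 0$, hence $p \equiv A + tB \pmod 2$. If $t$ is odd this is $A + B \equiv A + (A-1) \equiv 1 \pmod 2$, so $p$ is odd. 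In Type IV, $A$ is odd and $B = (A(5+2k) - \varepsilon)/2$; here $A(5 + 2k)$ is odd and $\varepsilon$ is odd so the numerator is even, but to get the parity of $B$ note $A(5+2k) - \varepsilon \equiv A(5+2k) - 1 \equiv 4 \pmod{4}$ when... actually I would just observe $c = 1$ so $cAB \equiv AB \equiv B \pmod 2$ (as $A$ is odd), giving $p \equiv A + B + tB \equiv 1 + B + tB \equiv 1 + B(1 + t) \pmod 2$; if $t$ is odd then $1 + t$ is even, so $p \equiv 1 \pmod 2$. In Type V, $A$ is odd and $B = Al + \varepsilon$ with $c = 1$, so again $cAB \equiv B \pmod 2$ and $p \equiv A + B + tB \equiv 1 + B(1+t) \pmod 2$; for $t$ odd this gives $p$ odd.

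So the uniform mechanism in Types IV and V is that $A$ is odd and $c = 1$, making $p \equiv 1 + B(1 + t) \pmod 2$, which is odd whenever $t$ is odd; in Type III it is the relation $B \equiv A - 1 \pmod 2$ together with $c = 2$ killing the $cAB$ term. I do not anticipate a serious obstacle here — the only thing requiring care is correctly extracting the parity of $B$ from the table entries in each type (especially Type IV, where $B$ is defined by a division by $2$, so one must be slightly careful, though as noted it can be bypassed entirely since $c=1$ and $A$ odd make only the parity of $B$ relevant and that enters symmetrically). I would present the argument type by type, first recording $p \equiv A - \varepsilon(cAB + tB^2) \pmod 2$ and $\varepsilon \equiv 1$, $B^2 \equiv B \pmod 2$, then dispatching each of the three cases in a sentence.
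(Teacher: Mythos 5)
Your proof is correct and follows the same route as the paper: the paper's proof simply records that $A$ and $B$ have opposite parity in Type III and that $A$ is odd in Types IV--V, and declares the lemma ``readily checked'' from there, which is exactly the mod-$2$ computation you carry out explicitly via $p \equiv A + cAB + tB \pmod 2$. Your observation that the parity of $B$ in Type IV can be bypassed (since $c=1$ and $A$ odd give $p \equiv 1 + B(1+t)$) is a clean way to handle the one delicate point.
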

\begin{proof}Note that $A,B$ have opposite parity in Type III, in the other Types, $A$ is odd (with $B$ odd in Type VI). From this the statement of the lemma is readily checked.
\end{proof}

\begin{thm}If $K\subset L(p,q)$ is a Berge knot of Type III{--}V, dual to $K(\delta,\ve,A,k,t)$, and $t\ne -1,0$ then $K$ does not support $\xi_{p,q}$.
\label{thm:typeIII}
\end{thm}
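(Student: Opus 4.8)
The plan is to show that for $t \ne -1, 0$ the congruence \eqref{Noncongruence} fails, hence by Lemma~\ref{pSLMainCong} we cannot have $p\ \slbar(K) = -\chi(K)$, and so $K$ does not support $\xi_{p,q}$ by Theorem~\ref{thm:BE}. The strategy is entirely arithmetic: I will compare the congruence class modulo $p$ of $p\ \slbar(K)$, given by \eqref{SLtypeIII} as $-1 \mp B \pm B^{-1}$ with the two sign choices $a_K = \pm B$, against the value $-\chi(K)$ computed in \eqref{Eulerchar}, namely $|b|(B-1) + \delta(A-a) - B$ with $|b| = |t|\,B + cA$ (up to the $\pm A$ term; precisely $b = -\delta\ve(cA + tB)$ so $|b| = |cA + tB|$). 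Assuming the congruence holds, I will derive an inequality that forces $t \in \{-1, 0\}$, using the quadratic-size lower bounds on $p$ from Lemma~\ref{B^2lem}.

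First I would handle the case $a_K = B$. Multiplying \eqref{Noncongruence} through by $B$ and substituting gives a congruence of the form $B^2 + \bigl(-\chi(K)\bigr)B \equiv 1 \pmod p$ (after clearing the $B^{-1}$ term). Now $-\chi(K) = |b|(B-1) + \delta(A-a) - B$, and $|b| = |cA + tB|$ grows linearly in $B$, so the product $(-\chi(K))\cdot B$ is of size roughly $|t| B^3$, while $p$ is of size roughly $|t| B^2$ by Lemma~\ref{B^2lem}. The key move is to reduce $-\chi(K)$ modulo $p$ first: since $p = |Bb + \delta A|$, we have $Bb \equiv \mp \delta A \pmod p$ (sign depending on which of $\pm$ makes $p$ positive), so $B \cdot |b| \equiv \pm \delta A \pmod p$ up to sign, which collapses the dominant term. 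After this reduction $-\chi(K)\cdot B$ becomes a bounded-size expression in $A, B, t$ modulo $p$, and comparing it to $B^2 - 1$ (also reducible via $B^2 \equiv$ something of size $\lesssim AB/|t| + \cdots$, or by noting $a_K^2 q \equiv -1$) yields a genuine equality of integers once both sides are seen to lie in an interval of length $< p$. That equality will be a polynomial relation among $A, B, k, t$ that I can check forces $|t| \le 1$; the parity lemma (Lemma~\ref{Parityofp}) handles the borderline odd/even subcases, exactly as in the torus-knot arguments of Sections~\ref{secI}--\ref{secII}. The case $a_K = -B$ is symmetric — by the remark after Lemma~\ref{pSLMainCong}, checking $a_K = \pm B$ suffices, and the $-B$ case changes the signs of the $B$ and $B^{-1}$ terms in \eqref{SLtypeIII}, leading to an analogous relation.

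The main obstacle I anticipate is the bookkeeping in the ``reduce modulo $p$, then promote to an integer equality'' step: one must pin down the signs ($\delta, \ve$, and the sign of $Bb + \delta A$), track the small terms ($cA$, $\delta(A-a)$, the $\pm 1$ from $B$ in terms of $A, k$), and verify that after reduction both sides of the comparison genuinely lie in an interval of length less than $p$ so that the congruence becomes an equation. There may be a handful of sporadic small cases (small $A$, small $k$, or $t = \pm 1$ boundary overlaps, or $|t+1| = 0$ versus $|t| = 0$) where the size estimates of Lemma~\ref{B^2lem} are not quite strong enough; these I would dispatch by direct computation of the self-linking number from the toroidal front projection together with \cite[Corollary 1.6]{Cor}, exactly as was done for the $i = 3$, $k = 5$ torus knot in Section~\ref{secI}. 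I expect the upshot to be clean: the congruence \eqref{Noncongruence} survives only when $t \in \{-1, 0\}$, which is precisely the claim.
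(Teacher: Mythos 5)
Your plan follows essentially the same route as the paper's proof: multiply the congruence through by $B$ to clear $B^{-1}$, use $p=\abs{Bb+\delta A}$ (i.e.\ $Bb\equiv -\delta A \md p$) to collapse the dominant term of $-\chi(K)\cdot B$ into a bounded expression, trap that expression strictly between consecutive multiples of $p$ via Lemma~\ref{B^2lem}, and invoke Lemma~\ref{Parityofp} for the borderline small-$\abs{t}$ subcases (which arise exactly in the $a_K=-B$ cases, where a residual $2B^2$ term is comparable to $p$). The paper resolves all of those borderline cases arithmetically rather than by direct front-projection computations, but that is a difference of execution, not of method.
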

\begin{proof}Our task is to show that (\ref{Noncongruence}) fails; we divide the proof into four cases, determined by $a_K=\pm B$ and whether $b$ is positive or negative. Applying (\ref{SLtypeIII}) and (\ref{Eulerchar}), one finds that if (\ref{Noncongruence}) holds then $|b|(B-1) + \delta A - \delta a + 1 \equiv B^{-1} \md p$ in the case $a_K=B$, and that $|b|(1-B) - \delta (A-a) + 2B - 1 \equiv B^{-1}\md p$ in the case $a_K=-B$. Using that $p=|Bb + \delta A|$ we may manipulate each of these to see that (\ref{Noncongruence}) is equivalent to the congruences in Table \ref{IIIbPos}.  Recall that $B\ge (c+1)A-1$ and $A\ge 2$.

\begin{table}[ht]
	\begin{tabular}{r r} 
	$(1-\delta a)B+\delta A \equiv 1 \md p$		&	$a_K=B,\ b>0$ \\
	$2\delta AB+(1-\delta a)B-\delta A \equiv 1 \md p$	&	$a_K=B,\ b<0$ \\
	$2B^2 - \delta A + (\delta a-1)B \equiv 1 \md p$	&	$a_K=-B,\ b>0$ \\ 
	$2B^2 - 2\delta AB + \delta A + (\delta a - 1)B \equiv 1 \md p$	&	$a_K=-B,\ b<0$ \\
	\end{tabular}
	\caption{Congruences by case, equivalent to (\ref{Noncongruence})}
	\label{IIIbPos}
\end{table}

Suppose that $a_K=B$ and $b>0$. Since $t\ne 0,-1$, we may use Lemma \ref{B^2lem} to find that
		\[ (1-\delta a)B+\delta A \le 2B - A < B^2 + B-A \le p. \] 
The left side is at least $A\ge2$ always, hence the first row of Table \ref{IIIbPos} cannot hold, therefore neither can (\ref{Noncongruence}) when $a_K=B,\ b>0$.

Turning to the case $a_K=B,\ b<0$, since we have $A-a\le B-cA+(1-a)$ and Lemma \ref{B^2lem},
		\[1< (2A-a)B - A+B \le B^2 -cAB +AB+(1-a)B + B - A \le B^2 + B - A \le p.\]
This shows that setting $\delta=1$ in row 2 of Table \ref{IIIbPos} does not hold. When $\delta = -1$ use Lemma \ref{B^2lem} again and note that 
		\[ -p +1 < -B^2 + A +(1+a)B < -B(2A) + A + (1+a)B < 0.\]
Thus the equivalence again fails, and so (\ref{Noncongruence}) fails for $a_K = B,\ b<0$.

The cases with $a_K = -B$ require some work for $|t|$ small. However, when $a_K=-B$, writing $\Theta_b$ for either of the expressions left of the congruence in Table \ref{IIIbPos}, depending on the sign of $b$, observe that $1<\Theta_b$ (it is at least $(B-A)(2B-1)$). Also \begin{equation}\Theta_b<2B^2\text{ if }b>0,\quad\text{ and }\quad\Theta_b < 2B^2+2AB < 3B^2\text{ if }b<0.\label{ThetabBound}\end{equation} So by Lemma \ref{B^2lem} the equivalences in Table \ref{IIIbPos} cannot hold if $t\ge 3$ or $t\le-4$. Furthermore, if $b>0$ then we may also exclude $t=2$ and $t=-3$. 

To address the remaining values of $t$ it is valuable to consider parity. Noting that $A$ is odd if $a=1$ and the parity of $B$ is opposite that of $A$ if $a=0$, we may conclude that $\Theta_b$ is always odd. If $t$ is odd then $p$ is odd, by Lemma \ref{Parityofp}, and so if $t$ is odd and $\Theta_b = mp + 1$, then $m$ must be even. In particular, for $t$ odd, the congruence in question does not hold if $\Theta_b\le 2p$.

When $t=1$ and $b>0$, or $t=-3$ and $b<0$, Lemma \ref{B^2lem} and (\ref{ThetabBound}) imply that $\Theta_b < 2p$. When $t=1$ and $b<0$ then $2p = 2B^2+2cAB - 2\delta A$. This is larger than $2B^2+2AB > \Theta_b$ when $\delta=-1$ and larger than $2B^2 > \Theta_b$ when $\delta=1$; hence $\Theta_b < 2p$ when $b<0$ and $t=1$. Having ruled out the case $t=-3,\ b>0$, we see that rows 3 and 4 of Table \ref{IIIbPos} fail for $t=1$ and $t=3$. 

Now suppose that $t$ is even ($t=\pm2$ being the remaining cases). If $t=2$ then we only need consider $b<0$. By Lemma \ref{B^2lem}, $2p>4B^2>\Theta_b$ and so the congruence of Table \ref{IIIbPos} holds only if $\Theta_b = p+1$. As $\Theta_b < 2B^2 < p$ when $\delta=1$ we assume $\delta=-1$. Consulting the definition of $b$, having $b<0$, $t>0$, and $\delta<0$ forces $\ve=-1$. In that case, $2A<B-1$. Now $p=2B^2+cAB+A$ which is larger than $\Theta_b$ in Type III (where $c=2$). In Types IV and V, where $A\ge 3$, we have
			\[\Theta_b - p = AB - 2A - 2B > AB -3B + 1 \ge 1\]
and thus $\Theta_b \ne p+1$, so Table \ref{IIIbPos} does not hold if $t=2$.

Finally, consider the case $t=-2$. If $b>0$ then $p = 2B^2 - cAB + \delta A$ and we check that
	\[\Theta_b - p = cAB + (\delta a-1)B - 2\delta A \le B^2 - AB + B + (\delta a-1)B - 2\delta A < B^2 +1 < p+1\]
by Lemma \ref{B^2lem}. Since $3A-1\le B$ when $c=2$ and $A\ge 3$ when $c=1=a$ the left side above is positive, so Table \ref{IIIbPos} cannot hold for $b>0$ in this case.

If $b<0$ then $p= 2B^2 - cAB - \delta A \ge B^2 + AB - B - \delta A$. In this case, Table \ref{IIIbPos} requires
		\begin{equation}\Theta_b - p = cAB - 2\delta AB + 2\delta A + (\delta a - 1)B \equiv 1 \md p.
		\label{tcaseminus2}
		\end{equation}
In Type III we have $c=2$. Then $\delta=1$ would make $\Theta_b - p=2A - B < 0$, but $-p+1 < 2A - B$, so the equivalence is not satisfied. If $\delta=-1$ instead, then $1 < \Theta_b-p$ and $\Theta_b-2p = 6AB - 2B^2 - 3A -B$, which is negative for $B\ge3A-1$. This shows (\ref{tcaseminus2}) doesn't hold in Type III.

In Types IV and V (where $c=1$) note the left side of (\ref{tcaseminus2}) is largest when $\delta=-1$ and, as $A\le B-A+1$, this at most equals $B^2+AB-2A-B<p$ since $c=1$. If $\delta=1$ the left side of (\ref{tcaseminus2}) is larger than $-p+1$, so we must have equality
		\[AB - 2\delta AB + 2\delta A + (\delta-1)B = 1.\]
But this equality does not hold for $\delta=\pm1$. Thus (\ref{tcaseminus2}) does not hold for Types IV or V, and we conclude that Table \ref{IIIbPos} does not hold in this case.
\end{proof}

To conclude that (\ref{Noncongruence}) does not hold for Types III {--} V, and thus that none of these knots support $\xi_{p,q}$, it remains to check $t=0$ and $t=-1$. We rule out these cases in Appendix \ref{sec:appendix}, but remark on one case. When $t=-1$, there is an instance of a dual to a torus knot and the equality $p\ \slbar(K) = -\chi(K)$ holds. This occurs for (the dual to) $K(\delta,\ve,A,k,t) = K(1,1,2,0,-1)$, which is in Type III and has $b = 1$ (this is the only instance when $b=1$ since $b=-\delta\ve(cA+tB)$, $B\ge(c+1)A-1$ and $A\ge 2$). As discussed above, the knot in $S^3$ is in this case the closure of $W(B)^bW(A+1-a)^\delta=W(5)W(3)$, which is a stabilization of $(\sigma_2\sigma_1)^2$, which has closure a trefoil.

\subsection{Type VII, knots on fiber surface of trefoil}
\label{secVII}

In \cite{Ras} a knot $K\subset L(p,q)$ in Type VII is described by the condition $a_K^2 + a_K + 1 \equiv 0 \md p$ on its homology class. We discuss the relation to Berge's original description.

\begin{lem} Suppose $a,p$ are integers with $0<a<p$. There is a pair of integers $r,s$ with \emph{gcd}$(r,s)=1$ such that $p=r^2+rs+s^2$ and $a\equiv r^2s^{-2} \md p$ if and only if $a^2+a+1\equiv 0 \md p$.
\label{lem:quadform}
\end{lem}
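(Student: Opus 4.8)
The statement is a number-theoretic characterization, so the plan is to prove both implications by elementary manipulation of the ternary quadratic form $r^2+rs+s^2$, exploiting its connection to the norm form of $\Z[\omega]$ (Eisenstein integers), where $\omega$ is a primitive cube root of unity. The key algebraic identity to keep in mind throughout is
\begin{equation}
4(r^2+rs+s^2) = (2r+s)^2 + 3s^2,
\end{equation}
together with the factorization $r^2+rs+s^2 = (r-\omega s)(r-\bar\omega s)$ over $\Z[\omega]$, and the observation that $r^2+rs+s^2 = \frac{r^3 - s^3}{r-s}$ when $r\neq s$, which makes the cyclotomic nature of the condition $a^2+a+1\equiv 0$ transparent (since $x^2+x+1 = \frac{x^3-1}{x-1}$).

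First I would prove the easier direction ($\Rightarrow$): assume $p = r^2+rs+s^2$ with $\gcd(r,s)=1$ and $a \equiv r^2 s^{-2} \md p$. I must check $s$ is invertible mod $p$: if a prime $\ell$ divided both $s$ and $p = r^2+rs+s^2$, it would divide $r^2$, hence $r$, contradicting $\gcd(r,s)=1$; so $\gcd(s,p)=1$. Then, setting $u = rs^{-1} \md p$, we have $a \equiv u^2$, and it suffices to show $u^2 + u + 1 \equiv 0$, equivalently $u^3 \equiv 1$ and $u \not\equiv 1$ — but then $a = u^2$ also satisfies $a^2 + a + 1 = u^4 + u^2 + 1 = (u^2+u+1)(u^2-u+1) \equiv 0$. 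To get $u^2+u+1\equiv 0$: multiply $p = r^2+rs+s^2 \equiv 0$ by $s^{-2}$ to obtain $u^2 + u + 1 \equiv 0 \md p$ directly. Done.

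For the converse ($\Leftarrow$), assume $a^2+a+1 \equiv 0 \md p$; I want to produce coprime $r,s$ with $p = r^2+rs+s^2$ and $r^2 s^{-2}\equiv a$. The natural approach is a descent/Thue-lemma argument: since $a$ is a root of $x^2+x+1$ mod $p$, consider the sublattice $\Lambda = \{(r,s)\in\Z^2 : r \equiv -as \md p\}$ of index $p$, and note that for $(r,s)\in\Lambda$ the quantity $r^2+rs+s^2 \equiv a^2 s^2 - a s^2 + s^2 = (a^2-a+1)s^2 \md p$; one checks using $a^2 \equiv -a-1$ that $a^2 - a + 1 \equiv -2a \equiv \ldots$ — more cleanly, $r^2+rs+s^2 \equiv s^2(a^2 - a + 1)$ and since $a^2+a+1\equiv 0$ we get $a^2 - a + 1 \equiv -2a \cdot$(something); rather than fuss, observe directly $r^2+rs+s^2 \equiv (r - \omega s)(r-\bar\omega s)$ vanishes mod $p$ because $r\equiv -as$ and $-a$ is congruent to $\omega$ or $\bar\omega$ in the sense that $(-a)^2 + (-a) + 1 = a^2 - a + 1$... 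I would instead use the substitution $r \equiv as$ isn't quite it; the correct congruence to impose is $r \equiv \zeta s$ where $\zeta$ is a root of $x^2+x+1$, and both $a$ and $a^2 (= -a-1)$ are such roots. So: impose $r \equiv a s \md p$; then $r^2 + rs + s^2 \equiv (a^2+a+1)s^2 \equiv 0 \md p$. By Minkowski's theorem (or the pigeonhole "Thue" argument), $\Lambda$ contains a nonzero $(r,s)$ with $r^2+rs+s^2 < $ (a constant times $\sqrt{p}\cdot\sqrt{p}$) — precisely, since the form $r^2+rs+s^2$ has determinant $3/4$, the lattice $\Lambda$ of covolume $p$ contains a nonzero vector with $r^2+rs+s^2 \le \frac{2}{\sqrt 3} p < 2p$; combined with divisibility by $p$ and positive-definiteness, this forces $r^2+rs+s^2 = p$ (it cannot be $0$ since the form is positive definite and $(r,s)\neq 0$). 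Finally $\gcd(r,s)=1$ follows because any common divisor $d$ would give $d^2 \mid p$; since $p$ is prime (it equals $r^2+rs+s^2$ realized as a surgery coefficient — though I should be careful whether primality is assumed; if $p$ is not assumed prime one argues $d^2 \mid p$ and replaces $(r,s)$ by $(r/d,s/d)$, reducing to the case of $p/d^2$, or one notes the hypothesis $0<a<p$ with $a^2+a+1\equiv 0$ forces appropriate coprimality). Then $r \equiv as$ gives $r^2 s^{-2} \equiv a^2 \equiv -a - 1$; to land on $a$ rather than $a^2$, swap the roles: impose instead $r \equiv a^{-1} s$ or equivalently note $rs^{-1}$ is a root of $x^2+x+1$, so $r^2 s^{-2} = (rs^{-1})^2 \equiv -(rs^{-1}) - 1$, and since $rs^{-1} \in \{a, a^2\}$ (the two roots), choosing the sign/representative gives $r^2 s^{-2} \equiv a$ after possibly replacing $s$ by $r$ and $r$ by $-r-s$ (which preserves the form: $(-r-s)^2 + (-r-s)s + s^2 = r^2 + rs + s^2$).

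\textbf{Main obstacle.} The genuinely non-routine part is the existence half of the converse — producing a representation $p = r^2+rs+s^2$ from the congruence condition. This is a geometry-of-numbers argument (Minkowski/Thue) and requires care that the short lattice vector is genuinely short enough to force equality with $p$ rather than $2p$ or more, and that it is primitive; the bookkeeping matching $r^2s^{-2} \equiv a$ exactly (rather than $\equiv a^2 \equiv -1-a$, the other root) via the symmetries $(r,s)\mapsto(s,r)$ and $(r,s)\mapsto(-r-s,s)$ of the form is fiddly but elementary. Everything else — the forward direction and the verification that the congruence $r^2+rs+s^2\equiv 0$ manifests the cyclotomic relation — is a one-line computation.
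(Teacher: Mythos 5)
Your forward direction is correct and is in substance the paper's own computation (the paper gets $a^3\equiv 1$ and $a^2\equiv rs^{-1}$ and then lands on $s^{-2}(r^2+rs+s^2)\equiv 0$; your factorization $u^4+u^2+1=(u^2+u+1)(u^2-u+1)$ is an equivalent shortcut). The converse is where you genuinely part ways with the paper. The paper does not construct the representation by geometry of numbers: it notes that $a^2+a+1=mp$ is itself primitively represented by $f(x,y)=x^2+xy+y^2$ (namely by $(a,1)$), cites the classical characterization of integers primitively represented by $f$ --- those of the form $3^{\varepsilon}\prod p_i^{k_i}$ with $p_i\equiv 1\pmod 3$ --- to conclude that $p$ is primitively represented as well, and then matches the $2^n$ primitive representations with $r,s>0$ against the $2^n$ roots of $x^2+x+1\pmod p$ by counting, so that the given root $a$ must be $r^2s^{-2}$ for one of them. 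Your Minkowski/Thue route is more self-contained and produces the representation attached to the specific root $a$ directly, but it must then establish primitivity by hand, which the paper gets for free from the cited theorem.

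That primitivity step is the one genuine gap, and you flag it without closing it. Since $p$ is the order of a lens space it is not assumed prime, so ``$d^2\mid p$ forces $d=1$'' is unavailable, and neither of your fallbacks works: replacing $(r,s)$ by $(r/d,s/d)$ yields a primitive representation of $p/d^2$, not of $p$; and the hypothesis $0<a<p$ does not by itself force coprimality. The gap is fillable inside your own framework by running the divisibility-versus-size argument one level down: if $d=\gcd(r,s)>1$, then $d^2\mid p$ and $(r',s')=(r/d,s/d)$ still satisfies $r'\equiv\zeta s'\pmod{p/d}$, whence $f(r',s')\equiv(\zeta^2+\zeta+1)(s')^2\equiv 0\pmod{p/d}$; but $f(r',s')=p/d^2$, and $(p/d)\mid(p/d^2)$ forces $d=1$, a contradiction. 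With that inserted, and with your $(r,s)\mapsto(s,r)$ swap to select the root $a$ rather than $a^2\equiv-a-1$ (valid because $a^3\equiv1$, so inverting $r^2s^{-2}$ exchanges the two roots), your converse is complete and the rest of the bookkeeping is sound.
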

\begin{proof} For the ``only if'' direction, since $a^3-1\equiv s^{-6}(r^6-s^6)=s^{-6}(r-s)(r^3+s^3)(r^2+rs+s^2)$, we see that $a^3\equiv 1 \md p$. A similar calculation shows that $r^3s^{-3}\equiv 1 \md p$, and so $a^2\equiv rs^{-1} \md p$. Hence, 
	\al{
	a+a^2+1	&\equiv r^2s^{-2} + rs^{-1} +1\\
			&\equiv s^{-2}\left(r^2+rs+s^2\right).
	}
Now, for any $0<a<p$, suppose that $a^2+a+1=mp$, for some integer $m$. Then $mp$ is primitively represented by the positive definite quadratic form $f(x,y)=x^2+xy+y^2$. A number is so represented if and only if its prime decomposition is of the form
		\[mp=3^{\ve}\prod p_i^k,\]
where $\ve$ is 0 or 1 and each $p_i$ is a prime congruent to 1 mod 3 (see, for example, \cite[Chapter 3 (p176)]{NZM}). Since $p$ must then have a prime decomposition of the same form, $p$ is also primitively represented by $f$. That is, there are coprime $r,s$ such that $r^2+rs+s^2=p$. 

Solutions to $x^2+x+1\equiv 0\md p$ are in one-to-one correspondence with solutions to $(x')^2\equiv-3\md p$ via the relation $x'\equiv2x+1\md p$. The number of such $x'$, which is $2^n$ where $n$ is the number of distinct prime factors $p_i\equiv1\md3$ of $p$, equals the number of primitive representations $p=f(r,s)$ (restricting to $r,s>0$). Each primitive representation $(r,s)$ gives rise to the solution $x=r^2s^{-2}$, and so $a$ must arise from one of these.
\end{proof}

The $r$ and $s$ of Lemma \ref{lem:quadform} were originally used to describe these Berge knots in $S^3$. We use this to find $-\chi(K)$.

The right and left handed trefoils can be embedded with Seifert surface $\Sigma$ on a standard genus 2 Heegaard splitting of $S^3$; see Figure \ref{fig:dblPrim}, where the curves $g_1$ and $g_2$ are cores of the two handles of $\Sigma$. A knot embedded on $\Sigma$ 
can be described by a pair of coprime numbers $r,s$: each Berge knot in Type VII is homologous to $r[g_1]+s[g_2]$ on $\Sigma$. Let $K'$ be such a knot, with $[K']=r[g_1]+s[g_2]$ on $\Sigma$. The surgery framing $p$ given by $\Sigma$ was found to be $p=r^2+rs+s^2$ \cite{Ber}.

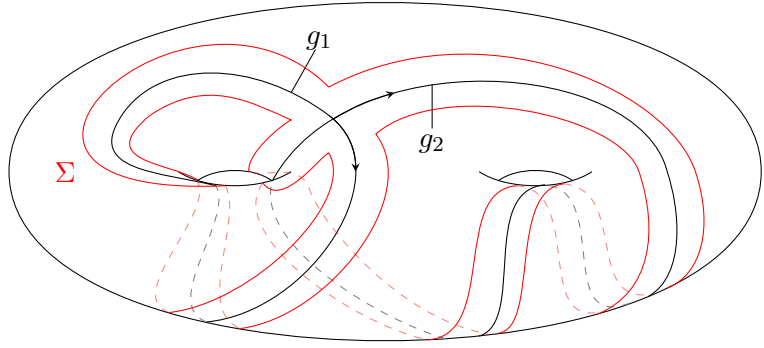
\begin{figure}[ht]
	\begin{center}
	\begin{tikzpicture}[scale=0.25,>=stealth]
		\draw 	(-20,0) ..controls (-20,12) and (20,12).. (20,0)
						..controls (20,-12) and (-20,-12)..(-20,0);
		\draw	(-11,0) ..controls (-9.5,-1) and (-6.5,-1) ..(-5,0)
				(5,0) ..controls (6.5,-1) and (9.5,-1) .. (11,0);
		\draw	(-10,-0.5) ..controls (-9,0.25) and (-7,0.25) .. (-6,-0.5)
				(6,-0.5) ..controls (7,0.25) and (9,0.25) ..(10,-0.5);
		\draw	(-3,3) ..controls (1,0) and (-4,-7.5)..(-9.5,-8);
		\draw[->]	(-3,3) ..controls (-1.4,1.8) and (-1.57,.15) ..(-1.57,0);
		\draw[dashed,gray]	(-9.5,-8)..controls (-12,-7) and (-8,-2.5) ..(-9,-0.75);
		\draw	(-9,-0.75)..controls (-12,0) and (-16,0) .. (-14,3)
						..controls (-12,6) and (-7,6) ..(-3,3);
		\draw[->]	(-2.75,2.8) ..controls (-1.25,3.65) ..(.5,4.2);
		
		\draw	(-6,-0.5)..controls (-2,7) and (13.5,6) ..(15,0)
						 ..controls (15.67,-2) and (16,-5.5) ..(14,-6.64);
		\draw[dashed,gray]	(13,-7)..controls (9,-8.5) and (12,-1) ..(8.5,-0.7);
		\draw	(8.5,-0.7)..controls (5,-1) and (7.5,-8) ..(5,-8.75);
		\draw[dashed,gray]	(3.25,-8.9)..controls(0,-8) and (-7,-4) ..(-6,-0.5);
		\draw	(-3.5,7) node {$g_1$}
				(2.5,1.5) node {$g_2$};
		\draw[thin]	(-3.7,6.5) -- (-5,4.2)
					(2.5,2.3) -- (2.5,4.65);
		
		\draw[red]	(-3,1) ..controls (-1.5,-1.5) and (-7,-7) ..(-11.5,-7.5)
					(-9.95,-0.45) ..controls (-11.5,0.5) and (-15.5,0.5) ..(-12.5,3)
								  ..controls (-9.5,5) and (-7.25,3.75) ..(-5,2.75)
								  ..controls (-5.5,2.5) and (-7.35,1)..(-7.25,0)
					(-3,1) ..controls (-3.5,0.75) and (-5.5,-2) ..(-6.5,-0.6)
					(2.25,-8.95) ..controls (6,-8.75) and (3.5,-1) ..(7,-0.75)
					(6,-8.6) ..controls (8.25,-8.35) and (6.5,-1) ..(9.5,-0.65)
					(11.25,-7.6) ..controls (14.25,-6.4) and (14.5,-3) ..(13.5,0)
								 ..controls (12.5,3) and (3,5) ..(-0.5,2)
					(15,-6.2)	..controls (16.8,-5.5) and (17.5,-3) ..(16.5,0)
								..controls (15,4.5) and (5,8.75) ..(-3,4.5)
					(-0.5,2)	..controls (2,-1.25) and (-3,-7.75) ..(-7.7,-8.35)
					(-3,4.5)	..controls (-10.5,12) and (-25,-1.5) ..(-8.5,-0.75)
					(-17,0) node {$\Sigma$};
		
	\draw[red,dashed,opacity=0.5]	(-11.5,-7.5) ..controls (-14,-6.5) and (-9,-2.25) ..(-10,-0.5)
									(-7.1,-0.05) ..controls (-6,-0.25) and (-4.5,0.5) ..(-4,-1)
												 ..controls (-3.5,-2.5) and (0.5,-7.75) ..(5.5,-8.65)
									(-6.5,-0.6)  ..controls (-8.5,-3.25) and (-2,-8) ..(2.25,-8.95)
									(7,-0.75)	..controls (10.5,-1) and (8,-8) ..(11.25,-7.5)
									(9.5,-0.65) ..controls (13,-1) and (10.5,-7.25) ..(14.3,-6.5)
									(-8.5,-0.75)..controls (-7.5,-2.5) and (-10.2,-7.35) ..(-7.7,-8.35);
	
	\end{tikzpicture}
	\end{center}		
	\caption{Berge knots on a Seifert surface of the trefoil}
	\label{fig:dblPrim}
\end{figure}

Writing $K$ for the surgery dual of $K'$, we find $-\chi(K)=-\chi(K')$ by realizing $K'$ (or its mirror) as the closure of a positive braid in $S^3$ which has index $r+s$ and $rs+r^2-r+s^2-s$ crossings \cite{Bak}. Therefore 
		\[-\chi(K)=r^2+rs+s^2-2(r+s) \equiv -2(r+s) \md p.\] 

Thus, (\ref{Noncongruence}) in this case is
				\[-1-a_K+a_K^{-1} \equiv -2(r+s) \md p.\]
By recalling Lemma \ref{lem:quadform}, $a_K^{-1} \equiv a_K^2 \md p$ and $a_K^2+a_K+1\equiv0 \md p$. Thus (\ref{Noncongruence}) is equivalent to
			\[2a_K^2 \equiv -2(r+s) \md p.\] 
As $a_K\equiv r^2s^{-2}\md p$, this implies $2rs^{-1} \equiv -2(r+s) \md p$. Since $p$ is not even, this is equivalent to 
		\[r \equiv -s(r+s)\equiv r^2 \md p.\]
But $r$ is invertible and so this implies that $r\equiv1$ and so $r=1$ as $r<p$. Considering the description of $K'$ on $\Sigma$, if $r=1$ then $K'$ can be isotoped onto a standard torus.

\subsection{Type VIII, knots on fiber surface of figure-eight}
\label{secVIII}
Similar to the trefoil, the figure eight knot and a Seifert surface of it $\Sigma$ may be embedded on the genus 2 Heegaard surface. The Berge knots of Type VIII are the embedded closed curves on $\Sigma$, determined as before by a coprime pair $r,s$. In this case the surgery coefficient is $p=|s^2-rs-r^2|$ and $a_K=\pm rs^{-1}$.

\begin{lem}Let $K\subset L(p,q)$ be a Type VIII Berge knot determined by $r,s>0$. Then either $p\ \slbar(K)\equiv 0 \md p$ or $p\ \slbar(K)\equiv -2 \md p$.
\label{VIIISLequivclass}
\end{lem}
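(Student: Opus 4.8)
The plan is to reduce this to an elementary congruence computation. By Lemma \ref{pSLMainCong}, every Legendrian representative $L$ of $K$ in $(L(p,q),\xi_{p,q})$ satisfies $p\ \slbar(L_+)\equiv -1-a_L+a_L^{-1}\md p$; since $a_L=a_K$ is a homological quantity, it is enough to evaluate $-1-a_K+a_K^{-1}\md p$ in the two cases $a_K\equiv rs^{-1}$ and $a_K\equiv -rs^{-1}$ coming from the description of Type VIII knots ($a_K=\pm rs^{-1}$).

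Before doing so I would note that $rs$ is a unit modulo $p=|s^2-rs-r^2|$: if a prime $\ell$ divided both $r$ and $s^2-rs-r^2$, then $\ell\mid s^2$, hence $\ell\mid s$, contradicting $\gcd(r,s)=1$; the case $\ell\mid s$ is symmetric. This lets me clear the inverse $s^{-1}$ (resp.\ $r^{-1}$) by multiplying a congruence through by $rs$ and then dividing back.

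The two computations are then immediate. When $a_K\equiv rs^{-1}$, so $a_K^{-1}\equiv sr^{-1}$, multiplying $-1-a_K+a_K^{-1}$ by $rs$ gives $-rs-r^2+s^2=s^2-rs-r^2\equiv 0\md p$, and hence $p\ \slbar(K)\equiv 0\md p$. When $a_K\equiv -rs^{-1}$, so $a_K^{-1}\equiv -sr^{-1}$, the same multiplication gives $-rs+r^2-s^2=-(s^2+rs-r^2)$; since $s^2+rs-r^2=(s^2-rs-r^2)+2rs\equiv 2rs\md p$, this is $\equiv -2rs\md p$, so $p\ \slbar(K)\equiv -2\md p$. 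Either way the claimed dichotomy holds.

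I do not expect a genuine obstacle here: the statement is just the formula of Lemma \ref{pSLMainCong} read against $p=|s^2-rs-r^2|$. The only points that need a word of care are the invertibility of $rs$ mod $p$ (used to divide out after clearing denominators) and the bookkeeping of both signs of $a_K$, which the discussion following Lemma \ref{pSLMainCong} confirms are the only cases one must check. The resulting dichotomy is exactly what will be needed to test whether (\ref{Noncongruence}) can hold for Type VIII.
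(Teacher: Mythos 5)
Your proof is correct and follows essentially the same route as the paper: both apply Lemma \ref{pSLMainCong} and then exploit the relation $r^2+rs-s^2\equiv 0\md p$ satisfied by $a_K=\pm rs^{-1}$ (the paper phrases it as $a^{-1}\equiv a+1\md p$ for $a=rs^{-1}$, while you clear denominators by multiplying through by the unit $rs$). The two computations agree case by case, so there is nothing further to add.
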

\begin{proof}
	Let $a\equiv rs^{-1} \md p$. By Lemma \ref{pSLMainCong} we have $p\ \slbar(K) \equiv -1-a_K+a_K^{-1} \equiv -1\mp a\pm a^{-1} \md p$. Note that
					\[a^2+a-1 \equiv s^{-2}(r^2+rs-s^2) \equiv 0 \md p,\]
	which implies that $a^{-1}\equiv a+1 \md p$. So if $a_K=a$ then $p\ \slbar(K) \equiv 0 \md p$ and if $a_K=-a$ then $p\ \slbar(K)\equiv -2 \md p$.
\end{proof}

We now indicate how to narrow our focus.

\begin{lem}Suppose $a,b > 0$ are relatively prime and let $n=b^2-ab-a^2$. Then there exist relatively prime $c,d>0$ such that $c>d$, $c^2+cd-d^2 = |n|$, and $cd^{-1} \equiv ab^{-1} \md n$.
\label{abSwitch}
\end{lem}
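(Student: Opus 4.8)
The plan is to recognize $n=-f(a,b)$ for the indefinite binary form $f(u,v)=u^2+uv-v^2$ of discriminant $5$, and to run a continued-fraction style reduction under which both $|n|$ and the relevant residue class are invariant. The basic move is the substitution $\sigma\colon(u,v)\mapsto(v-u,u)$ (with inverse $(u,v)\mapsto(v,u+v)$): a direct computation gives $f(\sigma(u,v))=-f(u,v)$, so $|f|$ is $\sigma$\nobreakdash-invariant; and $\sigma$ is unimodular, so it preserves coprimality. Since $n=-f(a,b)$, we have $|n|=|f(a,b)|$, and any iterate of $\sigma$ applied to $(a,b)$ still has $f$-value $\pm n$.

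First I would carry out the reduction. Starting from $(a,b)$ with $a,b>0$, apply $\sigma$ as long as the first coordinate is strictly smaller than the second. Since $\sigma(u,v)=(v-u,u)$ has both coordinates positive precisely when $v>u$, and the coordinate sum drops from $u+v$ to $v$, the process terminates after finitely many steps at a pair $(c,d)$ with $c\ge d>0$ and $\gcd(c,d)=\gcd(a,b)=1$. For such a pair $f(c,d)=c^2+d(c-d)\ge c^2>0$, hence $f(c,d)=|f(c,d)|=|f(a,b)|=|n|$, i.e.\ $c^2+cd-d^2=|n|$. Coprimality forces $c=d$ only in the degenerate case $c=d=1$, which occurs exactly when $|n|=1$; that case is excluded in the application (there $|n|=p>1$), so $c>d$ as required. (One does have to note that when $|n|=1$ no pair with $c>d>0$ represents $1$, so the hypothesis $p>1$ is genuinely needed.)

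It then remains to track the congruence. One checks $\gcd(a,n)=\gcd(b,n)=1$ (since $n\equiv b^2\pmod a$ and $n\equiv -a^2\pmod b$), and more generally that every coordinate appearing in the reduction is invertible mod $n$, because each pair $(u,v)$ along the way has $f(u,v)\equiv 0\pmod n$ and $\gcd(u,v)=1$. For such a pair, $f(u,v)\equiv 0\pmod n$ says $v^2\equiv uv+u^2$, so $\phi:=vu^{-1}$ satisfies $\phi^2\equiv\phi+1$ and hence $\phi^{-1}\equiv\phi-1\pmod n$. Applying $\sigma$ turns $(u,v)$ into $(v-u,u)$, and $(v-u)u^{-1}=\phi-1\equiv\phi^{-1}=uv^{-1}\pmod n$; that is, the quantity (first coordinate)$\cdot$(second coordinate)$^{-1}$ is unchanged modulo $n$ at every step. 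Therefore the terminal pair satisfies $cd^{-1}\equiv ab^{-1}\pmod n$, finishing the argument.

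The only real subtlety, beyond bookkeeping, is the invariance of the residue class $cd^{-1}\bmod n$ under the reduction; this rests on the ``golden ratio mod $n$'' identity $\phi^{-1}\equiv\phi-1$, which in turn uses that $n$ divides $f$ of every pair in the orbit. The degenerate case $|n|=1$ is the other point that needs a word, since there the stated conclusion genuinely requires $p>1$.
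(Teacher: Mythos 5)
Your proof is correct and follows essentially the same route as the paper's: the same form $f(u,v)=u^2+uv-v^2$, the same reduction map $(u,v)\mapsto(v-u,u)$, and the same mod-$n$ identity behind the invariance of $cd^{-1}$; your termination argument (strictly decreasing coordinate sum, plus the observation that a terminal pair with $c\ge d>0$ forces $f(c,d)>0$, hence $f(c,d)=|n|$) is cleaner than the paper's Fibonacci-limit argument and avoids its case split on the sign of $n$. Your remark that the statement literally fails when $|n|=1$ (no pair $c>d>0$ represents $1$) is a genuine catch that the paper does not address, though it is harmless in the intended application since there $|n|=p>1$.
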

\begin{proof}
	Write $f(x,y)=x^2+xy-y^2$. Defining $\Gamma(x,y)=(y-x,x)$, let $(a_i,b_i)=\Gamma^i(a,b)$ and note that
				\begin{equation}
				f(a_1,b_1) = -f(a,b),
				\label{pUnchanged}
				\end{equation}
	for any $a,b$. Hence $f(a_i,b_i)=(-1)^if(a,b)=(-1)^{i+1}n$. Note that for any $x,y>0$, if $f(x,y)<0$ then $x<y$.

	We first suppose that $n>0$ (implying that $a<b$). Since $f(a_{2i},b_{2i})=-n<0$ for any $i$, we must have $a_{2i+1}=b_{2i}-a_{2i}>0$ provided $a_{2i},b_{2i}$ are both positive. Moreover, if $a_{2i-1}<b_{2i-1}$ then $b_{2i+1}>0$. We will show that there is some $i\ge 0$ such that $a_{2i+1}>b_{2i+1}$. Choosing such an $i$ minimally then guarantees $c=a_{2i+1}$ and $d=b_{2i+1}$ are both positive, and $f(a_{2i+1},b_{2i+1})=n$. 

	To this end, observe that $f(a_1,b_1)$ being positive implies that $b_1<\frac{1+\sqrt5}2a_1$ which is equivalent to $a<\frac{\sqrt5-1}2b$. Let $F_i$ denote the $i^{th}$ Fibonacci number, starting the sequence by $F_{-1}=0, F_0=1$. By induction argument on $i$, it follows that $a_i=(-1)^i(F_ia-F_{i-1}b)$ (which implies $b_i=(-1)^{i-1}(F_{i-1}a-F_{i-2}b)$ for $i\ge1$).

	Given $i\ge1$, the supposition that $a_{2i-1}<b_{2i-1}$ implies by the previous paragraph that $a> \frac{F_{2i}}{F_{2i+1}}b$. But this latter inequality cannot hold for all $i\ge1$ since $a<\frac{\sqrt5-1}2b$ and $\lim_{i\to\infty}\frac{F_i}{F_{i+1}}=\frac{\sqrt5-1}2$. Thus $c=a_{2i+1}>b_{2i+1}=d$ for some $i\ge0$.

	Now suppose that $n<0$. If $a>b$ then as $f(a,b)=-n$ we may simply take $c=a$ and $d=b$. Otherwise define $(a_i,b_i)$ as before. An analogous argument to before  (with inequalities reversed) shows that $c=a_{2i}>b_{2i}=d$ for some $i$ and $f(c,d)=-n>0$.
%

	We finish the proof by observing that for any $i$, $a_i(a_i+b_i)\equiv b_i^2 \md n$. Thus 
				\[a_{i+1}b_{i+1}^{-1}\equiv (b_i-a_i)a_i^{-1} \equiv (b_i^2-a_i^2)b_i^{-2} \equiv a_ib_i^{-1} \md n.\]
\end{proof}

As a consequence, every Type VIII Berge knot is obtained from a pair $r>s>0$, with $r,s$ relatively prime. For in a given lens space, there is a unique knot in each homology class that has grid number one. As every Berge knot $K\subset L(p,q)$ has grid number one and the homology class is given by $\pm rs^{-1}$, Lemma \ref{abSwitch} implies that it suffices to consider $r>s>0$ (and thus $p=r^2+rs-s^2$).

\begin{lem}A Berge knot of Type VIII in $L(p,q)$ does not support $\xi_{p,q}$.
\label{VIIIthm}
\end{lem}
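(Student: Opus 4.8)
### Proof proposal for Lemma \ref{VIIIthm}

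The plan is to show that the congruence (\ref{Noncongruence}) fails for every Type VIII Berge knot, and then invoke Theorem \ref{thm:BE} (together with Lemma \ref{pSLMainCong}) to conclude that $\xi_{p,q}$ is not supported. By Lemma \ref{abSwitch} and the grid-number-one uniqueness of Berge knots in each homology class, we may restrict attention to pairs $r>s>0$ with $\gcd(r,s)=1$, so that $p = r^2+rs-s^2$ and $a_K=\pm rs^{-1}$. First I would record $-\chi(K)$: realizing $K'\subset S^3$ (or its mirror) as the closure of a positive braid, one reads off the index and crossing number from Berge's/Baker's description exactly as in the Type VII case, obtaining an expression for $-\chi(K)$ as a quadratic polynomial in $r,s$; modulo $p=r^2+rs-s^2$ this simplifies to a linear form in $r,s$ (analogous to $-\chi(K)\equiv -2(r+s)$ in Type VII). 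Meanwhile Lemma \ref{VIIISLequivclass} tells us $p\,\slbar(K)\equiv 0$ or $-2 \pmod p$ according to the sign of $a_K$.

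Next I would combine these. In the case $a_K=a:=rs^{-1}$, (\ref{Noncongruence}) becomes $0\equiv -\chi(K)\pmod p$, i.e.\ the linear form in $r,s$ must vanish mod $p=r^2+rs-s^2$; in the case $a_K=-a$ it becomes $-2\equiv -\chi(K)\pmod p$. Using $a^2+a-1\equiv 0$ (hence $a^{-1}\equiv a+1$) from the proof of Lemma \ref{VIIISLequivclass}, and $a\equiv rs^{-1}$, I would clear denominators (multiplying through by the appropriate power of $s$, invertible mod $p$) to turn each congruence into a homogeneous quadratic congruence in $r,s$ mod $p$. Reducing that quadratic modulo the defining relation $r^2\equiv s^2-rs$, it collapses to a condition of the form ``$\ell(r,s)\equiv 0\pmod p$'' for an explicit small linear form $\ell$, or to a small constant being $\equiv 0$. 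Since $0<\ell(r,s)<p$ for $r>s>0$ outside a short list of small exceptional pairs — this is where size estimates on $p=r^2+rs-s^2$ versus $r+s$ do the work, just as Lemma \ref{B^2lem}-type bounds did in Theorem \ref{thm:typeIII} — the congruence cannot hold. For the finitely many remaining small pairs $(r,s)$ (and, in the torus-knot-looking case $s=1$), I would either compute $\slbar(K)$ directly from the toroidal front projection and apply \cite[Corollary 1.6]{Cor} to see $p\,\slbar(K)<-\chi(K)$ strictly (as was done for the stray Type I case with $i=3$, $k=5$), or observe that $s=1$ forces $K'$ onto a standard torus and so is already covered by Type I.

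The main obstacle I anticipate is the same one that made Theorem \ref{thm:typeIII} lengthy: after reducing to a clean congruence, ruling it out requires sharp enough inequalities relating $p=r^2+rs-s^2$ to the linear form appearing, and there will be a handful of genuinely small pairs $(r,s)$ where the inequalities are too weak and a parity argument (is $p$ forced odd? note $p=r^2+rs-s^2$ and consider $r,s$ mod $2$) or a direct self-linking computation must finish the job. A secondary subtlety is getting the crossing number of the positive braid for the figure-eight-surface knots exactly right — an off-by-one there would shift $-\chi(K)$ — so I would cross-check the resulting $-\chi(K)$ against the known Seifert genus of these Berge knots (e.g.\ via the Alexander polynomial data in \cite{Gre} or \cite{Ras}) before running the congruence argument.
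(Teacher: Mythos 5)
Your overall skeleton matches the paper's: restrict to $r>s>0$ via Lemma \ref{abSwitch}, use Lemma \ref{VIIISLequivclass} to reduce the problem to showing $-\chi(K)\not\equiv 0$ and $-\chi(K)\not\equiv -2 \pmod p$, compute $-\chi(K)$ from a positive braid presentation, and finish with a size estimate. But the load-bearing middle step of your plan does not go through as described. The braid presentation (from \cite[Appendix B]{Bak}) has index $s$ and $(ms-r)(r-(m-1)s)+s(s-1)$ crossings, where $m\ge 2$ is defined by $(m-1)s<r<ms$; so $-\chi(K)=(2m-1)rs-m(m-1)s^2-r^2+s^2-2s$ is not a single fixed quadratic polynomial in $r,s$ --- its coefficients depend on $m\approx r/s$. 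If you reduce it modulo $r^2\equiv s^2-rs$ you get $ms\bigl(2r-(m-1)s\bigr)-2s$, which is still quadratic, has size comparable to $p=r^2+rs-s^2$ itself, and does not collapse to a small linear form or constant as it did in Type VII. Consequently the inequality $0<\ell(r,s)<p$ you want to invoke has no $\ell$ to apply to, and the anticipated ``short list of exceptional pairs plus parity'' endgame never gets off the ground.

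The missing idea is the paper's observation that the substitution $(r,s)\mapsto(r-s,s)$ leaves $x(r,s):=-\chi(K)$ \emph{exactly} invariant (it decrements $m$ by one). Iterating, one replaces $r$ by $\bar r=r-(m-1)s$ with $0<\bar r<s$ and gets $-\chi(K)=\bar r s-\bar r^2+s^2-2s$ on the nose. This quantity is visibly positive and, expanding back and using $m\ge 2$, is strictly less than $r^2+rs-s^2-2=p-2$; hence $-\chi(K)$ lies strictly between $0$ and $p-2$ and cannot be congruent to $0$ or $-2$ modulo $p$. No exceptional pairs, no parity argument, and no separate treatment of $s=1$ are needed. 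Your instinct to cross-check the crossing count is sound, but the real obstacle is not an off-by-one in the braid word; it is that without the Euclidean-algorithm reduction to $\bar r<s$ you have no way to trap $-\chi(K)$ in an interval of length less than $p$.
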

\begin{proof}
	For relatively prime $r>s>0$, let $m\ge2$ be such that $(m-1)s<r<ms$. Describing $K$ as the closure of a braid was discussed in \cite[Appendix B]{Bak}, where it is shown that the Type VIII knot (or its mirror) in $S^3$ is the closure of a positive braid with index $s$ and $(ms-r)(r-(m-1)s)+s(s-1)$ twists. Thus we calculate the maximal Euler characteristic by
				\begin{equation}
				-\chi(K)=x(r,s):=(2m-1)rs-m(m-1)s^2-r^2+s^2-2s.
				\label{eqn:chi}
				\end{equation}
	We consider the $m$ in the expression $x(r,s)$ as determined by $r$ and $s$ as in the previous paragraph.

	Note that $(r,s)\mapsto(r-s,s)$ acts as the identity on $x(r,s)$: since $(m-2)s<r-s<(m-1)s$,
				\[x(r-s,s)=(2m-3)(r-s)s - (m-1)(m-2)s^2-(r-s)^2+s^2-2s = x(r,s).\]
	Define $\bar r = r-(m-1)s$. We obtain a pair $(\bar r,s)$, where $\bar r<s$. As $m\ge2$,
			\al{
			-\chi(K) = x(\bar r,s)	&= \bar rs - {\bar r}^2+s^2-2s \\
									&= rs -(m-1)s^2 - r^2 + 2(m-1)rs-(m-1)^2s^2+s^2-2s \\
									&< rs + r^2 -(m-1)s^2 - (m-1)^2s^2 + s^2-2s \\
									&\le rs + r^2 - s^2-2 = p-2.
			}
	As $0<\bar r<s$ it is clear that $0<x(\bar r,s)$. Thus it cannot be that $-\chi(K) \equiv 0 \md p$ nor that $-\chi(K)\equiv-2 \md p$. By Lemma \ref{VIIISLequivclass} the equivalence (\ref{Noncongruence}) is impossible.
\end{proof}

\subsection{Types IX{--}X, sporadic Berge knots}
\label{secIX-X}
Each knot $K\subset L(p,q)$ in these families is determined by an integer $j\in\Z\setminus\{0,-1\}$. The corresponding surgery coefficient, homology class and $-\chi(K)$ is given in Table \ref{typeIXXknots} \cite{Yam2}.

\begin{table}[ht]
	\begin{tabular}{c | c | c | c | c}
	Type	& $p$		& $a_K$		& $-\chi(K),\ j>0$	& $-\chi(K),\ j<-1$	\\ \hline
	IX	& $22j^2+9j+1$	& $\pm(11j+2)$	& $22j^2-1$		& $22j^2+18j+3$	\\ \hline
	X	& $22j^2+13j+2$&$\pm(11j+3)$& $22j^2+4j-1$		& $22j^2+22j+5$ 	\\ \hline
	\end{tabular}
	\caption{Data for knot $K$ corresponding to $j\in\Z\setminus\{0,-1\}$}
	\label{typeIXXknots}
\end{table}

We must check that (\ref{Noncongruence}) fails in each case.

In Type IX, suppose that $a_K=11j+2$. When $j>0$ note that $(11j+2)^{-1}\equiv -(22j+5) \md p$. Hence, (\ref{Noncongruence}) becomes $-8-33j \equiv 22j^2-1 \md p$, which is equivalent to $24j +6 \equiv 0 \md p.$ But $p > 24j+6$ for all $j>0$ so this congruence cannot hold.

When $j<-1$ then (\ref{Noncongruence}) is the same as $-8-33j \equiv 22j^2+18j+3 \md p$, which is equivalent to $42j+10 \equiv 0 \md p.$ Here $p > |42j+10|$ if $j<-2$, and the congruence also fails when $j=-2$; thus (\ref{Noncongruence}) fails in this case also.

If $a_K=-(11j+2)$ then we would need $42j+8 \equiv 0 \md p$ when $j>0$ and $24j+4 \equiv 0 \md p$ when $j<-1$. These also do not hold, showing (\ref{Noncongruence}) is false in this case too.

In Type X, suppose that $a_K = 11j+3$ which has $-(22j+7)$ as an inverse modulo $p$. For $j>0$ a sharp Bennequin bound requires $24j+8 \equiv 0 \md p$ and for $j<-1$ it requires that $42j+14\equiv 0 \md p$. But here $p > 24j+8$ for $j>0$ and $p>|42j+14|$ if $j<-2$, so these congruences do not hold; If $j=-2$ then $p=64$ and $42j+14 = -70 \not\equiv 0 \md{64}$.

Finally, let $a_K = -(11j+3)$. Here, if $j>0$ then the congruence is $42j+12 \equiv 0 \md p$, and if $j<-1$ the congruence is $24j+6 \equiv 0 \md p$. These again fail, $p$ being larger than $|24j+6|$ for $j<-1$ and larger than $42j+12$ if $j>1$. When $j=1$ we have $p=37$ and $42j+14 = 56 \not\equiv 0 \md p$. Thus (\ref{Noncongruence}) fails for each $K$ in Type X.

\section{Proof of main result}
\label{sec:proof}

Our main result, Theorem \ref{mainthm}, is a consequence of the following.

\begin{thm}If $K\subset L(p,q)$ is a Berge knot and $K$ is not dual to a torus knot, then $K$ does not support the contact structure $\xi_{p,q}$.
\label{mainthmBerge}
\end{thm}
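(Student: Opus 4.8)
The plan is to prove the contrapositive of Theorem \ref{mainthmBerge}: if a Berge knot $K\subset L(p,q)$ supports $\xi_{p,q}$, then the congruence (\ref{Noncongruence}) holds for $K$, whereupon Lemma \ref{lem:Noncongruence} forces $K$ to be dual to a torus knot. So, granting Lemma \ref{lem:Noncongruence}, everything reduces to the short observation that the hypothesis ``$K$ supports $\xi_{p,q}$'' produces a sharp self-linking identity whose reduction modulo $p$ is precisely (\ref{Noncongruence}).

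First I would record the relevant features of a Berge dual $K\subset L(p,q)$: it is rationally fibered with fiber $F$, the order of $[K]$ in $H_1(L(p,q))$ equals $p$, and $-\chi(F)=-\chi(K)=2g(K)-1$ --- the last point because the corresponding lens space knot $K'\subset S^3$ is fibered of Seifert genus $g(K')=g(K)$ by Ni \cite{Ni} (see Section \ref{sec:bg}). By definition ``$K$ supports $\xi_{p,q}$'' means $\xi_{p,q}=\xi(K,F)$, so the construction of a supported contact structure for a rational open book \cite{BEVHM} makes the binding $K$ a transverse knot in $(L(p,q),\xi_{p,q})$ with
\[p\cdot\slk_{\xi_{p,q}}(K)=-\chi(F)=-\chi(K).\]
(Equivalently, this is the equality case of the rational Bennequin--Eliashberg inequality, i.e.\ Theorem \ref{thm:BE} applied with $\xi=\xi_{p,q}=\xi(K,F)$; no Giroux torsion can enter, since we are assuming that very equality.)

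Next I would evaluate the left-hand side modulo $p$. The binding, like any transverse knot, is transversely isotopic to the positive push-off $L_+$ of some Legendrian $L$ representing $K$, and $\slk$ is a transverse-isotopy invariant, so Corollary \ref{ResofSL} gives $p\cdot\slk_{\xi_{p,q}}(K)=p\cdot\slk(L_+)\equiv a_Kb_K-a_K-b_K\pmod p$; since $K$ admits an integral surgery to $S^3$, Lemmas \ref{HomClassCongr} and \ref{QuadResLem} (this is the content of Lemma \ref{pSLMainCong}) turn this into
\[p\cdot\slk_{\xi_{p,q}}(K)\equiv -1-a_K+a_K^{-1}\pmod p.\]
Comparing the two evaluations of $p\cdot\slk_{\xi_{p,q}}(K)$ yields $-1-a_K+a_K^{-1}\equiv-\chi(K)\pmod p$, which is exactly (\ref{Noncongruence}); Lemma \ref{lem:Noncongruence} then says $K$ is dual to a torus knot, proving the contrapositive and hence Theorem \ref{mainthmBerge}.

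The genuine obstacle is Lemma \ref{lem:Noncongruence} itself, carried out in Section \ref{sec:types}. There one must march through Types I--X, in each family pinning down the homology class $a_K$ together with a positive-braid (or grid-number-one) model that computes $-\chi(K)$, and then checking that the modular identity (\ref{Noncongruence}) fails --- except when the defining parameters degenerate, in which case $K$ turns out to be dual to a torus knot (for instance $r=1$ in Type VII, or the single case $K(1,1,2,0,-1)$ in Type III). The recurring technical points are keeping orientation conventions consistent so that $a_K$ and $b_K$ feed correctly into Corollary \ref{ResofSL}, and bounding $-\chi(K)$ against $p$ sharply enough (via Lemmas \ref{B^2lem}, \ref{abSwitch}, and the quadratic-form input of Lemma \ref{lem:quadform}) to rule out the congruence for all but finitely many parameter values, the residual finitely many then being disposed of by direct calculation.
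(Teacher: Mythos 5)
Your proposal is correct and is essentially the paper's own argument, just phrased as the contrapositive: the paper likewise combines Theorem \ref{thm:BE} with Lemma \ref{pSLMainCong} to show that supporting $\xi_{p,q}$ forces the congruence (\ref{Noncongruence}), and then invokes the case-by-case analysis of Section \ref{sec:types} (Lemma \ref{lem:Noncongruence}) to conclude $K$ must be dual to a torus knot. Your additional remarks on why the binding's self-linking reduces mod $p$ to the homology-class formula are consistent with the discussion following Corollary \ref{ResofSL}.
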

\begin{proof} By the work done in Section \ref{sec:types} we have that $-1-a_K+a_K^{-1}\not\equiv-\chi(K) \md p$. By Theorem \ref{thm:BE} and Lemma \ref{pSLMainCong} this implies that $K$ cannot support $\xi_{p,q}$.
\end{proof} 

\begin{proof}[Proof of Theorem \ref{mainthm}]
As previously remarked, there are at most two universally tight contact structures on $L(p,q)$, and exactly two if $0<q<p-1$. The other universally tight structure $\overline\xi_{p,q}$ is obtained from $\xi_{p,q}$ by reversing coorientation. Any positively transverse knot $T$ in $\overline\xi_{p,q}$ is the positive transverse pushoff of some oriented Legendrian $L$ in $\overline\xi_{p,q}$. The knot $L$ is Legendrian in $\xi_{p,q}$ and has the same Thurston-Bennequin invariant, but has rotation number which is negative of that in $\overline\xi_{p,q}$. Hence the self-linking number $\slk_{\overline\xi_{p,q}}(T)$ equals $\tb_{\xi_{p,q}}(L)+\rot_{\xi_{p,q}}(L)$, the self-linking of the negative transverse pushoff of $L$ in $\xi_{p,q}$. 

In Section \ref{sec:types} where it was checked that $-1-a_K+a_K^{-1}\not\equiv-\chi(K) \md p$, for a $K$ a Berge dual, we allowed for either sign on $a_K$. This amounts to checking the self-linking number of the negative pushoff. Hence, if a Berge knot $K\subset L(p,q)$ supports either $\xi_{p,q}$ or $\overline\xi_{p,q}$, then it is dual to a torus knot. 

For any dual $K\subset L(p,q)$ to a lens space knot, there is a Berge knot in the homology class of $K$ with isomorphic knot Floer homology \cite{Gre}. As the knot Floer homology determines $-\chi(K)$ and $p\ \slbar$ modulo $p$ only depends on the homology class of $K$, Theorem \ref{mainthm} then follows.
\end{proof}

\begin{proof}[Proof of Corollary \ref{FDTCcor}]
Let $K'\subset L(p,q)$ be the surgery dual to $K$. As the exteriors of $K$ and $K'$ are homeomorphic and $K'$ is primitive, the exterior of $K$ in $S^3$ is finitely covered by the exterior of a knot, say $\tilde{K}\subset S^3$. By \cite[Chapter 3]{GAW}, the covering $S^3\setminus\nu(\tilde{K}) \to S^3\setminus \nu(K)$ is cyclic.

Alternatively, one can take $\Sigma_p(K)$, the $p$-fold cyclic cover branched over $K$, then do 1-surgery on the image of $K$ in $\Sigma_p(K)$ to get $\tilde{K}\subset S^3$ as the surgery dual. Let $h$ as the monodromy of the fibration $S^3\setminus\nu(K)$ and $g$ the monodromy for $S^3\setminus\nu(\tilde{K})$. Letting $\gamma$ be a pushoff of the image of $K$ in $\Sigma_p(K)$ into a page of the fibration, we have that $g = h^p\circ D_{\gamma}^{-1}$, where $D_{\gamma}$ is the right-handed Dehn twist along $\gamma$.

Our assumption on $K$ and Theorem \ref{mainthm} imply that $\tilde{K}$ supports an overtwisted contact structure on $S^3$. By \cite{HKM} this implies \[p c(h) - 1 = c(g) < 1\] and so $c(h)< 2/p < 2/(2g(K)-1)$.
\end{proof}


\appendix
\section{Types III{--}V, for $t=0,-1$}
\label{sec:appendix}

We finish the two remaining cases of $t=0$ and $t=-1$, with the same notation as in Section \ref{secIII-VI}. In each case we need to show the congruence in Table \ref{IIIbPos} does not hold.

We begin with $t=0$, in which case $p=cAB+\delta A$ if $b>0$ and $p=cAB-\delta A$ if $b<0$. 

Now $(1-\delta a)B+\delta A < cAB+\delta A=p$ when $b>0$, and so row 1 of Table \ref{IIIbPos} fails when $t=0$, $a_K=B$, and $b>0$. If $b<0$ and $a_K=B$ then Table \ref{IIIbPos} has $\delta(2A-a)B-\delta A + B \equiv 1 \md p$. Since $p=cAB-\delta A$, we can rewrite the congruence as $(3-c-\delta)A - (\delta a-1)B \equiv 1 \md p$. When $c=2$ the left side is $B+(1-\delta)A$ and when $c=1$ it is $(1-\delta)B+(2-\delta)A$. Both are less than $p$ but larger than 1.

If $a_K=-B$ then we need to show that $\Theta_b\not\equiv 1 \md p$. For $t=0$, the sign of $b$ is $-\delta\ve$, and so $cAB\equiv \ve A$ in this case. By describing $B$ in terms of the parameter $k$ (see Table \ref{BbinTypes}) one can then show that
		\al{
		2B^2 \equiv \begin{cases} -\ve B+1, \text{ in Type III} \\ \ve B+1,\text{ in Type IV} \\ 4\ve B-2,\text{ in Type V},\end{cases}
		}
from which we obtain, if $b>0$, that
		\al{
		\Theta_b+\delta A=2B^2 + (\delta a-1)B 	&\equiv \begin{cases} - (\ve+1)B + 1,\text{ in Type III} \\
																	  - B + 1,\text{ in Type IV} \\
																	  (3\ve-1)B - 2,\text{ in Type V}.\end{cases}
		}
In the case $b<0$ one need only change the left side above to $\Theta_b-\delta A$ and subtract either $A$ or $2A$ from the right-hand side.

In Type III we see that $\Theta_b\not\equiv 1\md p$ since $\Theta_b$ is equivalent to either $A-2B+1$ or $-A+1$ which are strictly between $-p+1$ and $0$ as $p=2AB+\delta A$. The Type IV case is similar as $\Theta_b \equiv -B -\delta A + 1 \md p$. In Type V, $\Theta_b$ is either congruent to $2B+A-2$ if $\ve=1$, or to $-4B-A-2$ if $\ve=-1$. As $A\ge 3$ and $B>2A$ in this case, the former is strictly between 1 and $p$. As the latter occurs when $\ve=-1$, and $A$ is odd in Type V, $-4B-A-2$ is either $-B-2$ modulo $p$ or is strictly greater than $-p+1$. In each case, $\Theta_b\not\equiv 1\md p$.

The case when $b<0$ can be handled similarly to verify that $\Theta_b\not\equiv 1\md p$.

We now finish by addressing the case $t = -1$. 

As $b=-\delta\ve(cA+tB)$ and $B>cA$ we have that the sign of $b$ is $\delta\ve$ in this case. Thus $p=B^2-cAB+\ve A$. 

Suppose that $c=1$ (working in Types IV and V). If $b<0$, $\delta=1$ then $\Theta_b = 2B^2-2AB+A = 2p+3A$, but $1<3A<AB-A<p$ so $\Theta_b\not\equiv 1\md p$. Now, for $b<0$, let $k\ge 2$. This implies $B\ge 4A+1$. Then if $\delta=-1$ we see that
	\al{
	\Theta_b \le 2B^2 + \frac12(B-1)B - A -2B 	&< 3B^2+B+A \\
												&\le 4B^2-4AB + A \\
												&= 4p-3A.
	}
As we observed before, $t$ being odd implies that $\Theta_b=mp+1$ can only hold if $m$ is even. So for $\Theta_b\equiv 1\md p$ to hold requires $\Theta_b=2p+1$ in this case. But $4AB-2B > 3A+1$ makes $\Theta_b=2p+1$ impossible. 

The cases when $k=0$ or $k=1$ can be handled individually. In the first ($k=0$), since $B=\frac12(5A-1)$ or $B=2A+1$, we have that $\Theta_b =(7A-3)B-A=4p-(A+1)B-5A > 2p+1$, or we have $\Theta_b = 6AB-A = 6p-6B-7A > 4p+1$ when $A\ge5$. When $A=3$ we get $\Theta_b=123 \equiv -1 \md p$. In the second ($k=1$) we have either $B=\frac12(7A-1)$ or $B=3A+1$. In the first case $\Theta_b = (9A-3)B - A = 4p-(A+1)B-5A>2p+1$ and in the second $\Theta_b = 8AB-A=4p-4B-5A>2p+1$. Thus we have taken care of when $b<0$ and $c=1$.

When $b>0$ and $c=1$ then $\Theta_b = 2B^2 - \delta A + (\delta-1)B \equiv 2AB -3\delta A +(\delta-1)B \md p$. Since $B-A > A$ we have
						\[2p = 2(B-A)B+2\delta A > 2AB +2\delta A.\]
Now, as $b>0$ we have $\delta=\ve$. This implies that if $\delta=-1$ then $2B>5A$ and so for $\delta=\pm1$, we get $2AB -3\delta A +(\delta-1)B < 2AB+2\delta A$. As $1 < 2AB -3\delta A +(\delta-1)B$ this impies that $\Theta_b\not\equiv 1\md p$ for $b>0$ and $c=1$.

To finish the case $t=-1$ we consider $c=2$, the Type III knots. Recalling the exceptional case mentioned in Section \ref{secIII-VI}, when $b>0$ we may assume $k\ge 1$ in the case $t=-1$. This implies that $B\ge 5A-1$ and thus that $p\ge 3AB-\ve(B-A)$. Also note that $\Theta_b\equiv 4AB-3\ve A-B \md p$ in this case. Now $1<4AB-3\ve A-B <6AB-2\ve B+\ve A\le 2p$. Since $t$ is odd, this implies that $\Theta_b\not\equiv 1\md p$.

When $t=-1$ and $b<0$, consider first the case that $k\ge 1$. Here the expression $\Theta_b$ is congruent modulo $p$ to $4AB-3\ve A+2\ve AB-B$ which is strictly larger than 1. If this latter expression is also at most $2p$, which occurs when $\ve=-1$, we are done as in the previous paragraph. Thus we may suppose that $\ve=1$ and that 
						\[2p\le 4AB-3\ve A+2\ve AB-B = 6AB - 3A-B\] 
which is less than $4p$ when $k\ge1$. Hence, for $\Theta_b\equiv 1\md p$ to hold it would have to be that $6AB-3A-B = 2p+1$. Either $B=5A-1$ or $B>5A-1$ and in each case $6AB-3A-B < 2p+1$.

Finally, when $b<0$, if $k=0$ then $B=3A-\ve$. We then have that $\Theta_b = 12A^2+6\ve A^2 - 5A - 7\ve A + \ve$ and $p = 3A^2-3\ve A+1$. Thus, if $\Theta_b = mp+1$ for some $m\ge1$ we separate into the cases that $\ve=1$ or $\ve =-1$. If $\ve =-1$ then $\Theta_b=mp+1$ has no real solution for $A$ if $m\ge 2$. Again, $t$ being odd allows us to exclude $m=1$. If $\ve=1$ then $\Theta_b=mp+1$ has no real solution in $A$ if $m\ge7$. In fact, $\Theta_b=mp+1$ does not have an integer solution for $A$ if $m\ge1$ and so $\Theta_b\not\equiv 1\md p$ when $b<0$, finishing the case $t=-1$.

\bibliography{bergedualsrefs}
\bibliographystyle{alpha}
	
\end{document}